\documentclass[12pt]{amsart}
\usepackage{amsfonts,amssymb,eucal}

\newcommand{\GM}{\ensuremath\tilde{{\Gamma}}}
\usepackage{color}
\usepackage{amsthm} 
 \usepackage{amsmath} 
 \usepackage{latexsym}
\usepackage{bbold,mathbbol,bbm}
\numberwithin{equation}{section}
\newcommand{\ep}{\ensuremath{\epsilon\,}}
\newcommand{\te}{\ensuremath{\tau}}

\newcommand{\mb}[1]{{\mbox{\boldmath{$#1$}}}}
\newcommand{\mc}[1]{{\mathcal{#1}}}
\newcommand{\got}[1]{{\mathfrak{#1}}}
\newcommand{\db}[1]{{\mathbb{#1}}}
\newcommand{\pa}{\partial}

\newcommand{\R}{\ensuremath{\mathbb{R}}}
\newcommand{\C}{\ensuremath{\mathbb{C}}}
\newcommand{\N}{\ensuremath{\mathbb{N}}}

\newcommand{\Hi}{\ensuremath{\got{H}}}

\newcommand{\g}{\ensuremath{\got{g}}}
\newtheorem{Remark}{Remark}

\newtheorem{Theorem}{Theorem}
\newtheorem{Proposition}{Proposition}

\theoremstyle{definition} 

\textwidth15.6cm
\textheight22cm
\hoffset-1cm
\voffset-1cm
 \newcommand{\SL}{\ensuremath{{\mbox{\rm{SL}}(2,\R)}}}
  \newcommand{\Spp}{\ensuremath{{\mbox{\rm{Sp}}(2,\R)}}}
\newcommand{\Ka}{K\"ahler}
\newcommand{\mr}[1]{{\mathrm{#1}}}
\def\ii{\operatorname{i}}
\newcommand{\dd}{\operatorname{d}}
\newcommand{\un}{\ensuremath{\mathbb{1}_n}}

\newtheorem{deff}{Definition}
\hyphenation{ho-mo-ge-ne-ous}
\hyphenation{di-men-sio-nal}
\hyphenation{Chris-to-fell}
\hyphenation{co-rresp-onds}
\hyphenation{Ad-van-ces}
\renewcommand{\Re}{\operatorname{Re}}
\renewcommand{\Im}{\operatorname{Im}}
\allowdisplaybreaks 
\newcommand{\Sp}{\ensuremath{{\mbox{\rm{Sp}}(n,\R)}}}

\begin{document}
\title{Geodesics on the  extended  Siegel--Jacobi upper half-plane}
\author{Stefan  Berceanu}
\address[Stefan  Berceanu]{National
 Institute for Physics and Nuclear Engineering\\
         Department of Theoretical Physics\\
         PO BOX MG-6, Bucharest-Magurele, Romania}
       \email{Berceanu@theory.nipne.ro}
     \begin{abstract}
  The semidirect product of the real Heisenberg group ${\rm H}_1(\mathbb{R})$ with ${\rm
    SL}(2,\mathbb{R})$, called the  real Jacobi group
  $G^J_1(\mathbb{R})$, admits a four-parameter
  invariant metric expressed in the S-coordinates. We  determine
  the geodesic equations on the extended  Siegel--Jacobi upper
  half-plane $\tilde{\mathcal{X}}^J_1=\frac{G^J_1(\R)}{\rm{SO}(2)}\approx
   \mathcal{X}^J_1\times \mathbb{R}\approx \mathcal{X}_1\times
   \mathbb{R}^3$, where  $\mathcal{X}^J_1$ ($\mathcal{X}_1)$ denotes the Siegel-Jacobi
   upper half-plane (respectively Siegel upper half-plane). Equating successively  with zero the values of
   the three parameters in the geodesic equations on
   $\tilde{\mathcal{X}}^J_1$, we get the geodesic equations on 
    $\mathcal{X}^J_1$, $\mathcal{X}_1$ and ${\rm H}_1(\mathbb{R})$.
\end{abstract}

\subjclass{53C22,32F45,53C55,53C30,81R30}
\keywords{Jacobi group, invariant metric,  Siegel--Jacobi disk, Siegel--Jacobi upper
  half-plane,  extended
  Siegel--Jacobi upper half-plane, geodesics,
 coherent states, geodesic mapping}
\maketitle

\tableofcontents
\section{Introduction}

The real Jacobi group of degree $n$ is defined  as  $G^J_n(\R):={\rm
  Sp}(n,\R)\ltimes \mr{H}_n(\R)$ \cite{bs,ez,tak,ZIEG}, where  $\rm{H}_n(\R)$ denotes the
$(2n+1)$-dimensional real Heisenberg group,  while  the semidirect product
$\mr{H}_n\rtimes{\rm Sp}(n,\R)_{\C}$, ${\rm Sp}(n,\R)_{\C}:= {\rm Sp}(n,\C)\cap {\rm U}(n,n)$, is 
 denoted by $G^J_n$ \cite{sbj,nou}.
Both Jacobi groups
$G^J_n(\R)$ and $G^J_n$ are  intensively  investigated in Mathematics, 
Mathematical Physics and Theoretical Physics \cite{jac1,SB15,
  SB19,BERC08B,gem,bs,ez},
\cite{yang}-\cite{Y10}.

The Siegel-Jacobi  upper half space is the  $G^J_n(\R)$-homogeneous
manifold 
$\mc{X}^J_n:=\frac{G^J_n(\R)}{\rm{U}(n)\times \R}$
 $\approx \mc{X}_n\times
\R^{2n}$ \cite{SB15,SB19,yang,Y08}, where  $\mc{X}_n$ denotes the Siegel upper half space realized as 
 the noncompact Hermitian symmetric space $ \frac{\Sp}{\rm{U}(n)}$ \cite[p 398]{helg}.
The Siegel-Jacobi  ball  is $\mc{D}^J_n:=  \frac{G^J_n}{\rm{U}(n)\times \R}\approx \C^n\times\mc{D}_n$ \cite{sbj},  where  $\mc{D}_n\approx  \operatorname{Sp}(n, \R
)_{\C}/\operatorname{U}(n)$ denotes the Siegel (open)  ball  of degree
$n$ \cite{helg}.

The Jacobi group  $G^J_n$ is a unimodular, non-reductive, algebraic group of
Harish-Chandra type  \cite{gem,LEE03,SA71,SA80}, and 
 $\mc{D}^J_n$  is a  reductive,
non-symmetric manifold \cite{SB15,SB19}. 
The holomorphic irreducible unitary representations of $G^J_n$ 
based on $\mc{D}^J_n$    constructed in 
 \cite{gem,bs,tak,TA99} 
are   relevant   in  several  areas of mathematics such as Jacobi forms, automorphic forms,
L-functions and modular forms, spherical functions, the ring of
invariant differential operators, 
theta functions, Hecke operators, Shimura varieties and  Kuga fiber
varieties.

 The groups $G^J_n(\R)$, ${\rm Sp}(n,\R)$,
$\mr{H}_n(\R)$, and the K\"ahlerian  homogenous manifolds
$\mc{X}^J_n$
 and $\mc{X}_n$ are isomorphic with $G^J_n$, ${\rm Sp}(n,\R)_{\C}$, $\mr{H}_n$,
$\mc{D}^J_n$, 
respectively  $\mc{D}_n$, see  \cite{sbj,FC,gem,
  bs,ez,yang,Y07}. We denote with the same symbol $\mr{H}_n$ both
isomorphic groups $\mr{H}_n$ and $\mr{H}_n(\R)$.

The Jacobi group was investigated 
 \cite{lis2,neeb96,neeb} via 
 coherent states (CS) \cite{mosc,mv,perG}. 
 CS-systems based on the Siegel-Jacobi ball  have applications
in  quantum mechanics, geometric quantization,
dequantization, quantum optics, squeezed states, quantum
teleportation,  quantum tomography,  nuclear structure,  signal  processing, 
Vlasov kinetic equation, see references in   \cite{SB20,GAB,SB19}. 

Applying  Berezin's  procedure  \cite{ber73}-\cite{berezin}  to obtain  balanced 
 metric on homogenous  K\"ahler
manifolds \cite{arr, don,alo}, we have determined the two-parameter
invariant metric on the $G^J_n$ ($G^J_n(\R)$)-homogenous  manifold
$\mc{D}^J_n$ (respectively $\mc{X}^J_n$), firstly for $n=1$ in \cite{jac1,BER77} and
then for any  $n\in \N$ in 
\cite{sbj,nou,SB14}, see also  the papers of Yang \cite{Y07,Y08,Y10}
and \cite[Section 5.1]{nou}.  In order to determine three-parameter invariant metric on the
five-dimensional manifold $\tilde{\mc{X}}^J_1$, called
extended Siegel-Jacobi half-plane  \cite{SB19}, we have
abandoned the CS-method in the favor of Cartan's moving frame
method \cite{cart4,cart5,ev},  considering $G^J_1(\R)$ embedded in
$\Spp$ \cite{bs,ez}. In  \cite{SB20N} we have
 determined the three-parameter invariant metric on the extended Siegel-Jacobi upper half
space $\tilde{\mc{X}}^J_n$, $n\in \N$, a generalization of the
metric on $\tilde{\mc{X}}^J_1$ obtained in \cite{SB19}.

 In our studies on the geometry of the Siegel-Jacobi disk by the
 CS-method,  in \cite {jac1, FC, SB14} we paid special attention to
 the determination of the geodesic equations on $ \mc {D} ^ J_1 $,
 while in   \cite{GAB}
 we calculated  
geodesics on the Siegel-Jacobi ball $\mc{D}^J_n$. Equating successively
with zero  parameters in 
 the four-parameter    invariant metrics on  $G^J_1(\R)$ expressed in
 the S-coordinates $(x,y,\theta,p,q,\kappa)$ \cite{bs,ez}, we obtained
 in \cite{SB19}
 the invariant metric on $\tilde{\mc{X}}^J_1$, $\mc{X}^J_1$,
 $\mc{X}_1$.

 The present paper, 
devoted to  geodesic equations on homogeneous manifolds
associated with  the real Jacobi group $G^J_1(\R)$,   is a
continuation of \cite{SB20,SB19}. Using the results obtained in \cite{SB19} and applied in
 \cite{SB20},  we determine
the  geodesic equations on homogeneous manifolds
associated to   the real Jacobi group $G^J_1(\R)$ in the spirit of
\cite{jac1,FC,SB14, GAB}.  Although we are not able to integrate the
 geodesic equations,  we make important
remarks on geometric and physical meaning of the variables used.  Taking successively the values  
of the four parameters of the invariant metric on $G^J_1(\R)$ equal with zero in the  geodesic equations 
on the extended Siegel-Jacobi upper half plane, we get the  geodesic equations
 on the Siegel-Jacobi upper
half-plane,  Siegel upper half-plane and Heisenberg group.

The paper is laid out as follows. Section \ref{PRE} recalls known
formulas on  coherent states, balanced metric,  geodesics, and describes the
parametrization of $G^J_1(\R)$ with  the S-coordinates.
Proposition 
\ref{BIGTH} recalls the four-parameter invariant metric of
$G^J_1(\R)$ obtained in \cite[Theorem 5.7]{SB19}. 
Section \ref{GSJD} summarizes our 
previous results on the invariant \Ka~ two-form  $\omega_{\mc{D}^J_1}
(w,z)$, where $(w,z)\in (\mc{D}_1,\C)$. Proposition
\ref{PRFC}, taken  from \cite{SB20,SB19}, expresses the effect of the
change of coordinates  $FC: (w,z)\rightarrow
(v,\eta)$, partial Cayley transform $\Phi:(w,z)\rightarrow
(v,u)\in\mc{X}^J_1$, $FC_1:(v,u)\rightarrow (v,\eta)$ and $(v,u)\rightarrow (x,y,p,q)$ applied to 
 the invariant \Ka~ two-form on
$\mc{D}^J_1$.  Proposition
\ref{STR}, a completion of Proposition \ref{PRFC}, gives the change
of coordinates  $\Phi_1:=FC_1\circ\Phi:~\mc{D}^J_1\ni (w,z)\rightarrow(v,\eta)=(x+\ii
y,q+\ii p)\in \mc{X}^J_1 $,  called  the second  partial
Cayley transform.  
The   geodesic equations  on $\mc{D}^J_1$ in the complex variables $(w,z)$ are given  in Proposition
\ref{GER} and Remark \ref{RE2}, while Proposition \ref{PP55} gives the
geodesic equations  in the real and imaginary parts of the complex
variables $w,z$. In Proposition \ref{P3} and Remark
\ref{RE3} in   Section \ref{GSJUH} the information on geodesics on
$\mc{D}^J_1$ in Section \ref{GSJD} is carried   in the information on geodesics on
$\mc{X}^J_1$ via the partial  Cayley transform,  which is proved to be
a geodesic mapping \cite[Definition 5.1 p 127]{mikes}. Geodesics on
$\mc{X}^J_1$ expressed in the variables $(x,y,p,q)$ are given in
Proposition \ref{P4}.  Proposition \ref{PP5} proves by brute force
calculation that the second partial Cayley transform $\Phi_1$
is a geodesic mapping.  Section \ref{GEX} is devoted to the geodesics
on the extended Siegel-Jacobi upper half-plane  
$\tilde{\mc{X}}^J_1$ corresponding to the three-parameter invariant
metric in the S-variables $(x,y,p,q,\kappa)$ determined in
\cite{SB19}. To help the reader,  in  Section \ref{gm} are
recalled   some
facts  on geodesic mappings extracted from \cite{alx,EISEN,helg,mikes}.

The new results obtained in this paper are
contained in the  Remarks \ref{RM10}, \ref{RE3}, 
  item 6 of Proposition  \ref{BIGTH}, Propositions 
  \ref{STR}--\ref{PP5}
and Theorem \ref{PROP5}, which is the main result of the present
work.

{\bf Notation}

We denote by $\mathbb{R}$, $\mathbb{C}$, $\mathbb{Z}$ and $\mathbb{N}$ 
 the field of real numbers, the field of complex numbers,
the ring of integers,   and the set of non-negative integers, respectively. We denote the imaginary unit
$\sqrt{-1}$ by~$\ii$, the real and imaginary parts of a complex
number $z\in\C$ by $\Re z$ and $\Im z$ respectively, and the complex 
conjugate of $z\in\C$ by $\bar{z}$. We denote by $\det(M)$ the
determinant of the  matrix~$M$. $M(n,m,\db{F})$ denotes the set
of $n\times m$ matrices with entries in the field $\db{F}$. We denote
by $M(n,\db{F})$ the set $M(n,n,\db{F})$.
$\un$ denotes the unit matrix in $M(n,\db{F})$.
 We denote by ${\dd }$ the differential. 
We use Einstein convention i.e.  repeated indices are
implicitly summed over. The scalar product of vectors in the Hilbert
space  $\got{H}$ is
denoted $(\cdot,\cdot)$.  If $f$ is a real or  complex function of
$t\in\R$, then sometimes we use the  standard abbreviations  $\dot{f}:=\frac{\dd f}{\dd t}$,
$\ddot{f}:=\frac{\dd^2 f}{\dd t^2}$. We also denote $\pa_i:=\frac{\pa}{\pa
  x^i}, i=1,\dots,n.$

\section{Preliminaries}\label{PRE}

 \subsection{Balanced metric and coherent states}\label{BMES}
In Perelomov's approach
to CS  \cite{perG} it is supposed that there exists 
 a continuous, unitary, irreducible 
representation $\pi$
 of a   Lie group $G$
 on a   separable  complex  Hilbert space \Hi. 
 If $H$ is the isotropy group of the representation $\pi$, then 
two types of CS-vectors belonging to  $\got{H}$ are locally defined on
$M=G/H$:  the normalized (un-normalized) CS-vector
$\underline{e}_x$ (respectively, $e_z$) \cite{SB03,perG}
 \begin{equation}\label{2.1}
\underline{e}_x=\exp(\sum_{\phi\in\Delta^+}x_{\phi}{\mb{X}}^+_{\phi}-{\bar{x}}_{\phi}{\mb{X}}^-_{\phi})e_0,
\quad e_z=\exp(\sum_{\phi\in\Delta^+}z_{\phi}{\mb{X}}^+_{\phi})e_0,
\end{equation}
where $e_0$ is the extremal weight vector of the representation $\pi$,
$\Delta^+$ is the set of positive roots
of the Lie algebra $\got{g}$, and 
$X^+_{\phi}$   ($X^-_{\phi}$) 
 are the positive (respectively, negative) generators. For  $X\in
 \got{g}$  we denoted in \eqref{2.1} $\mb{X}:=\dd\pi(X)$  \cite{SB03,SB14,perG}.

We  denoted  by $FC$ \cite{nou} the change of variables $x \rightarrow  z$ in
formula \eqref{2.1} such that
\begin{equation}\label{2.2}
\underline{e}_x =  (e_z,e_z)^{-\frac{1}{2}}e_z; \quad z = FC(x).
\end{equation}

At first our interest in geodesics  on homogenous spaces $ M=G/H$ in the
context of CS came from the fact that for symmetric manifolds $M$ the
FC-transform   explicitly provides  the solution of geodesic equations on $M$ through the identity of $M$
\cite{ber97A,sbl}. Next
we showed that this property is still true for naturally reductive
manifolds \cite{ber97}. But the non-symmetric manifold  $\mc{X}^J_1$ $(\tilde{\mc{X}}^J_1)$ is  not naturally
reductive  with respect to the balanced metric \cite{SB19}
(respectively,  the three-parmeter invariant metric \cite{SB20}). 

We consider  a  $2n$-dimensional K\"ahler manifold  $M=G/H$
endowed with a
$G$-invariant 
K\"ahler two-form 
\begin{equation}\label{kall}
\omega_M(z)=\ii\sum_{\alpha,\beta=1}^n h_{\alpha\bar{\beta}} (z) \dd z_{\alpha}\wedge
\dd\bar{z}_{\beta}, ~h_{\alpha\bar{\beta}}= \bar{h}_{\beta\bar{\alpha}}= h_{\bar{\beta}\alpha},
\end{equation}
 derived from the  K\"ahler potential $f(z,\bar{z})$ \cite{chern}
\[h_{\alpha\bar{\beta}}= \frac{\pa^2 f}{\pa {z}_{\alpha}\pa
  \bar{z}_{{\beta}}} .
\]
 The condition for  the Hermitian metric to be a K\"ahlerian one is, cf.  \cite[(6) p 156]{kn}
\begin{equation}\label{condH}
\frac{\pa h_{\alpha\bar{\beta}}}{\pa z_{\gamma}} = \frac{\pa
  h_{\gamma\bar{\beta}}}{\pa z_{\alpha}},~\alpha,\beta,\gamma
=1,\dots, n .
\end{equation}
As was pointed out  in \cite{berr} for $\mc{D}^J_1$ and proved in
\cite{SB15} for $\mc{D}^J_n$, $n\in \N$, the homogeneous  hermitian  metrics determined in \cite{jac1,sbj,nou}
are  actually  balanced metrics \cite{arr,don}, because they  come from  the K\"ahler
potential calculated as the scalar  product of two CS-vectors 
\[
f(z,\bar{z})=\ln K_M(z,\bar{z}), ~~K_M(z,\bar{z}):=(e_{\bar{z}},e_{\bar{z}}).
\]

\subsection{Geodesics on Riemannian and  \Ka~
 manifolds}\label{GD}

Let $(M,g)$ be a $n$-dimen-\\sional  Riemannian manifold. In  a local coordinate system $x^1,\dots$, $x^n$ the geodesic equations on a manifold $M$ 
with components of the linear connection $\Gamma$ are, see
e.g.  \cite[Proposition 7.8 p 145]{kn1}
\begin{equation} \label{GEOO}
\frac{\dd ^2 x^i}{\dd t^2}  +\sum_{j,k=1}^n\Gamma^i_{jk}\frac{\dd x^j}{\dd t}
\frac{\dd x^k}{\dd t}  =  0, ~i=1,\dots,n .
\end{equation}
The components $\Gamma^i_{jk}$ (Christofell's symbols of second kind) of a Riemannian (Levi-Civita)
connection $\nabla$  
are obtained from the  Christofell's symbols of first kind $[ij,k]$ by solving  the linear system of algebraic equations, see e.g. \cite[p 160]{kn1}
\begin{equation}\label{geot}g_{lk}\Gamma^l_{ji}=[ij,k]=:\frac{1}{2}\left(\frac{\pa
      g_{ki}}{\pa x_j}+\frac{\pa g_{jk}}{\pa x_i}-\frac{\pa g
      _{ji}}{\pa x_k}\right),~~i,j,k,l=1,\dots,n. \end{equation}
The $\frac{n^2(n+1)}{2}$ distinct $\Gamma$-symbols of an
 $n$-dimensional  manifold are given by the formulas
\begin{equation}\label{geoI}
  \Gamma^m_{ij}=[ij,k]g^{km}, \quad
\text{where~ ~ ~ }g^{mk}g_{kl}=\delta^m_l, ~~i,j,k,m=1,\dots,n.
\end{equation}

In the convention  $\alpha, \beta, \gamma,\dots$
run from 1 to $n$, while A,B,C,$\dots$  run through $1,\dots,n, $  
 $\bar{1},\dots,\bar{n}$, \cite[p 155]{kn}, for an almost complex
 connection without torsion we have
the relations 
$$\Gamma^{\alpha}_{\beta\gamma}=\Gamma^{\alpha}_{\gamma \beta}; \quad
\bar{\Gamma}^{\alpha}_{\beta\gamma}=\Gamma^{\bar{\alpha}}_{\bar{\beta\gamma}}$$
and all other $\Gamma^A_{BC}$ are zero. For a complex manifold of 
complex dimension  $n$ there are  $\frac{n^2(n+1)}{2}$     distinct $\Gamma$-s.

If we take into account the hermiticity condition  in \eqref{kall}  of
the metric   and the  K\"ahlerian
restrictions \eqref{condH}, 
the non-zero Christoffel's  symbols  $\Gamma$  of the Chern connection (cf. e.g.  \cite[\S 3.2]{ball}, also Levi-Civita connection,  cf. e.g. 
 \cite[Theorem 4.17]{ball})  which appear in \eqref{geot} are determined by the equations,
see also  e.g.   \cite[(12) at p 156]{kn}
\begin{equation}\label{CRISTU}  h_{\alpha\bar{\epsilon}}\Gamma^{\alpha}_{\beta\gamma}=
\frac{\pa  h_{\bar{\epsilon}\beta}}{\pa z_{\gamma}} = \frac{\pa
  h_{\beta\bar{\epsilon}}}{\pa z_{\gamma}}, ~~\alpha, \beta, \gamma,
\epsilon =1,\dots,n, 
\end{equation}
and \[
\Gamma^{\gamma}_{\alpha\beta}=\bar{h}^{\gamma\bar{\epsilon}}\frac{\pa
  h_{\beta\bar{\epsilon}}}{\pa z_{\alpha}}=
h^{\epsilon\bar{\gamma}}\frac{\pa h_{\beta\bar{\epsilon}}}{\pa  z_{\alpha}},\quad
\text{where~ ~ ~ }
h_{\alpha\bar{\epsilon}}h^{\epsilon \bar{\beta}}=\delta_{\alpha\beta}.
\]

It is easy to prove
\begin{Remark}\label{RM10}Let $M$ be a \Ka~ manifold with local complex  coordinates
   $(z^1,\dots,z^n)$. Let
   $\Gamma^i_{jk}(z)$ be the holomorphic Christofell's symbols in the
   formula \eqref{GEOO}  of
   geodesics 
\begin{equation}\label{geoD1} 
\frac{\dd ^2 z^i}{\dd t^2}+\Gamma^i_{jk}\frac{\dd z^j}{\dd t}\frac{\dd
z^k}{\dd t} =0. \quad i=1,\dots,n.
\end{equation}
Let us make in formula
   \eqref{geoD1} the change of variables $z^j=\xi^j+\ii \eta^j$,
  and let us introduce the notation $\xi^{j'}:=\eta^j$, $j':=j+n$,
  $j=1,\dots,n$.

  Then the geodesic equations \eqref{geoD1} in
  $(z^1,\dots,z^n)\in\C^n$ became geodesic equations in the
  variables $(\xi^1,\dots,\xi^n,\xi^{1'},\dots,\xi^{n'})\in\R^{2n}$

\begin{gather*}
 \frac{\dd^2\xi^i}{\dd t^2}+\GM^i_{jk}\frac{\dd\xi^j}{\dd
  t}\frac{\dd\xi^k}{\dd t}+2\GM^i_{jk'}\frac{\dd\xi^j}{\dd
  t}\frac{\dd\xi^{k'}}{\dd t}+\GM^i_{j'k'}\frac{\dd\xi^{j'}}{\dd
  t}\frac{\dd\xi^{k'}}{\dd t} =0,\\
 \frac{\dd^2\xi^{i'}}{\dd t^2}+\GM^{i'}_{jk}\frac{\dd\xi^j}{\dd
  t}\frac{\dd\xi^k}{\dd t}+2\GM^{i'}_{j'k}\frac{\dd\xi^{j'}}{\dd
  t}\frac{\dd\xi^{k}}{\dd t}+\GM^{i'}_{j'k'}\frac{\dd\xi^{j'}}{\dd
  t}\frac{\dd\xi^{k'}}{\dd t} =0,
\end{gather*}
where 
\[
\GM^{i}_{jk}=\GM^{i'}_{j'k}=-\GM^{i}_{j'k'}=\Re{\Gamma^i_{jk}};
~-\GM^{i}_{jk'}=\GM^{i'}_{jk}=-\GM^{i'}_{j'k'}=\Im{\Gamma^i_{jk}},
\]
and the real and imaginary parts of $\Gamma^i_{jk}$ are functions of $(\xi,\xi')\in\R^{2n}$.\end{Remark}

\subsection{The Jacobi group $G^J_1(\R)$ embedded in \Spp}

We have adopted  in  \cite{SB20,SB19} the notation from  \cite{bs,ez} for the real  Jacobi group   $G^J_1(\R)$,  
realized as  submatrices of $\text{Sp}(2,\R)$ of the form
\begin{equation}\label{SP2R}
G^J_1(\R) \ni g=\left(\begin{array}{cccc} a& 0&b &   q\\
\lambda &1& \mu & \kappa\\c & 0& d &  -p\\
         0& 0& 0& 1\end{array}\right),~ M=
    \left(\begin{array}{cc}a&b\\c&d\end{array}\right),~ \det M
   =1, \end{equation}
where
\[
Y:=(p,q)=XM^{-1}=(\lambda,\mu) \left(\begin{array}{cc}a&b\\c&d\end{array}\right)^{-1}=(\lambda d-\mu
  c,-\lambda b+\mu a)
\]
is related to the Heisenberg group $\rm{H}_1$ described by the
coordinates 
$(\lambda,\mu,\kappa)$.  For  coordinatization of  the  real Jacobi
group  we use the so called  $S$-coordinates
  $(x,y,\theta,p,q,\kappa)$  \cite[Section~1.4]{bs}.

  $M\in \SL$ is realized as an element in ${\rm Sp}(2,\R)$ by the relation
\begin{gather}\label{ALOS}
M=
\left(\begin{matrix}a&b\\c&d\end{matrix}\right) \rightarrow
g = \left( \begin{matrix} a& 0&b
 &0\\0&1&0&0\\c&0&d&0\\0&0&0&1\end{matrix} \right)\!\in G^J_1(\R),
\qquad g^{-1} = \left(\begin{matrix} d& 0&-b
 &0\\0&1&0&0\\-c&0&a&0\\0&0&0&1\end{matrix}\right). \end{gather}

The Iwasawa decomposition $M=NAK$ of $M$ as in~\eqref{ALOS}
reads, see \cite[p 10]{bs}
\begin{gather}\label{MNAK}M=\left(\begin{matrix}1&x\\0&
 1\end{matrix}\right)
\left(\begin{matrix}y^{\frac{1}{2}}&
 0\\0& y^{-\frac{1}{2}}
 \end{matrix}\right)
\left(\begin{matrix}
\cos\theta &\sin\theta\\-\sin\theta
 &\cos\theta \end{matrix}\right),\qquad y>0.\end{gather}
Comparing \eqref{MNAK} with \eqref{ALOS}, we find, see \cite[p 4]{SB19}

\begin{gather*}
a = y^{1/2}\cos\theta - xy^{-1/2}\sin\theta,\qquad b = y^{1/2}\sin\theta + xy^{-1/2}\cos\theta,\\
c = -y^{-1/2}\sin\theta,\qquad d = y^{-1/2}\cos\theta,
\end{gather*}
and
\begin{gather*}
x=\frac{ac+bd}{d^2+c^2}, \qquad y=\frac{1}{d^2+c^2},\qquad \sin\theta =-\frac{c}{\sqrt{c^2+d^2}},\qquad \cos\theta =\frac{d}{\sqrt{c^2+d^2}}.
\end{gather*}

\subsubsection{The Heisenberg group embedded in  $\rm{Sp}(2,\R)$}\label{HS1}
In this section, extracted from \cite{SB20,SB19}, we
summarize  the
parametrization of the Heisenberg group used in \cite{bs}.

The composition law of the 3-dimensional  Heisenberg group $\rm{H}_1$ in \eqref{SP2R} is
\[
(\lambda,\mu,\kappa)(\lambda',\mu',\kappa')=(\lambda+\lambda',\mu+\mu',\kappa+\kappa'+\lambda\mu'-\lambda'\mu).
\]

It is easy to observe that 
 \begin{Remark}\label{MPQ1}
 For an element of $\rm{H}_1$ as element of $G^J_1(\R)$ we have in \eqref{SP2R}
\begin{equation}\label{MPQ}
M=\mathbb{1}_2 ,\quad(p,q)=(\lambda,\mu),\quad (x,y,\theta)=(0,1,0),\end{equation}
and  we denote an element of $\rm{H}_1$ embedded in $\Spp$ by
\[
\rm{H}_1\ni g =  \left(\begin{array}{cccc}
 1& 0&  0 &   \mu\\
\lambda &1& \mu & \kappa\\
0 & 0& 1 &  -\lambda\\
         0& 0& 0& 1
\end{array}\right),~ g^{-1} =  \left(\begin{array}{cccc} 1& 0& 0&   -\mu\\
-\lambda &1& -\mu & -\kappa\\
0 & 0& 1 &  \lambda\\
         0& 0& 0& 1\end{array}\right).
\]
\end{Remark}

In \cite[p 7]{SB19} we determined   the left-invariant one-forms 
\[
\lambda^p = {\rm d} \lambda,\quad
\lambda^q = {\rm d} {\mu},\quad
\lambda^r = {\rm d} {\kappa}- \lambda{\rm d} {\mu} +{\mu}{\rm d}
\lambda,
\]
and the  left-invariant vector fields on $\rm{H}_1$
\begin{equation}\label{LEFT11}
L^p=\pa _{\lambda}
                                -{\mu}\pa_ {\kappa},\quad
                                L^q= \pa_{\mu}+\lambda \pa_{\kappa},\quad  L^r=\pa_{\kappa}.
                              \end{equation}

                              The left-invariant action of $\rm{H}_1$ on itself is given by
                              \[
\exp ({\lambda}\mc{P}+{\mu}\mc{Q}+{\kappa}\mc{R})({\lambda}_0,{\mu}_0,{\kappa}_0) =
({\lambda}+{\lambda}_0,{\mu}+{\mu}_0,{\kappa}+{\kappa}_0+{\lambda}{\mu}_0-{\mu}{\lambda}_0),
\]
where the generators $\mc{P},\mc{Q},\mc{R}$ of $\rm{H}_1$ verify the commutation relations
\cite[(3.2) in the first reference]{SB19}
\[ [\mc{P},\mc{Q}]=2\mc{R},\quad [\mc{P},\mc{R}]=[\mc{Q},\mc{R}]=0.
  \]

We get the three-parameter  left-invariant metric on $\rm{H}_1$
\begin{subequations}\label{MTRSINV}
\begin{align}
g^L_{H_1}({\lambda},{\mu},{\kappa}) & =a_1(\lambda^p)^2+
a_2(\lambda^q)^2+a_3(\lambda^r)^2\\ 
& = a_1{\rm d} {\lambda}^2+a_2{\rm d}{\mu}^2 +
a_3({\rm d} {\kappa}-{\lambda}{\rm d} {\mu} +{\mu}{\rm d}
{\lambda})^2,\quad a_1,a_2,a_3>0.
\end{align}
\end{subequations}

\subsubsection{Metrics}

In \cite[first reference, Proposition 5.4, Proposition 5.6, Theorem
5.7]{SB19} we have introduced 6 invariant one-forms $\lambda^1,\dots,\lambda^6$ associated
 with the real Jacobi group 
\cite[first  reference, (4.10), (5.15), (5.17)]{SB19} and we have expressed the
invariant metric  on several  homogenous spaces associated with
$G^J_1(\R)$     in the S-coordinates 
\begin{Proposition}\label{BIGTH}
The four-parameter left-invariant metric on the real Jacobi group\\ $G^J_1(\R)$ in the S-coordinates $(x,y,\theta,$ $p,q,\kappa)$ is
\begin{equation}\label{MTRTOT}
  \begin{split}
{\rm d}s^2_{G^J_1(\R)} &=\sum_{i=1}^6\lambda_i^2 =\alpha\frac{{\rm d}
  x^2+{\rm d} y^2}{y^2} +\beta\left(\frac{{\rm d} x}{y}+2{\rm
    d}\theta\right)^2  \quad \alpha,\beta,\gamma, \delta > 0\\
& + \frac{\gamma}{y}\big({\rm d}
q^2+ S {\rm d} p^2+2x{\rm d} p{\rm d}q\big)+\delta({\rm
  d} \kappa-p{\rm d} q+q{\rm d} p)^2, ~~S:=x^2+y^2.
\end{split}
\end{equation}

Depending on  the parameter values $\alpha, \beta, \gamma, \delta \ge 0$,
in the metric \eqref{MTRTOT}, we have invariant metric on the
following $G^J_1\R)$-homogeneous manifolds$:$
\begin{enumerate}\itemsep=0pt
\item[$1)$] the Siegel upper half-plane $\mc{X}_1$ if $\beta,\gamma,\delta =0$,
\item[$2)$] the group $\SL$ if $\gamma,\delta=0$, $\alpha\beta\not= 0$, 
\item[$3)$] the Siegel--Jacobi half-plane $\mc{X}^J_1$ if $\beta, \delta= 0$, 
\item[$4)$] the extended Siegel--Jacobi half-plane $\tilde{\mc{X}}^J_1$ if $\beta=0$,
\item[$5)$] the Jacobi group $G^J_1$ if $\alpha\beta\gamma\delta\not=
  0$.
\item[$6)$] the Heisenberg group $H_1$ if \eqref{MPQ} is verified and
  $a_1=a_2=\gamma$, $a_3=\delta$ in \eqref{MTRSINV}.
\end{enumerate}
\end{Proposition}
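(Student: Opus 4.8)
The plan is to verify each of the six cases by direct substitution of the indicated parameter values into the master metric \eqref{MTRTOT} and recognizing the resulting tensor. Most of the work is bookkeeping; the substantive content is (i) checking that the quotient structure $G^J_1(\R)/H$ is correctly reflected by which coordinates survive, and (ii) handling item $6)$, where an additional constraint \eqref{MPQ} (not merely a vanishing parameter) must be imposed.

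\emph{Cases $1)$--$5)$.} First I would observe that setting $\beta=0$ kills the term $\beta(y^{-1}\dd x+2\dd\theta)^2$, which is precisely the only term involving $\dd\theta$; hence the metric descends to a metric on the quotient by the $\mr{SO}(2)$-direction parametrized by $\theta$, i.e.\ on $\tilde{\mc X}^J_1=G^J_1(\R)/\mr{SO}(2)$. This gives item $4)$. Setting in addition $\delta=0$ removes the term $\delta(\dd\kappa-p\,\dd q+q\,\dd p)^2$, the only term containing $\dd\kappa$, so the metric further descends to the quotient by the central $\kappa$-direction, which is $\mc X^J_1=G^J_1(\R)/(\mr{SO}(2)\times\R)$; one is left with $\alpha\,y^{-2}(\dd x^2+\dd y^2)+\gamma\,y^{-1}(\dd q^2+S\,\dd p^2+2x\,\dd p\,\dd q)$, the two-parameter balanced metric on the Siegel--Jacobi half-plane recalled from \cite{SB19}, giving item $3)$. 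Setting also $\gamma=0$ removes all $\dd p,\dd q$ dependence, leaving $\alpha\,y^{-2}(\dd x^2+\dd y^2)$, the hyperbolic metric on $\mc X_1=\mr{SL}(2,\R)/\mr{SO}(2)$, which is item $1)$. For item $2)$ one instead keeps $\beta\neq0$ but sets $\gamma=\delta=0$: the metric becomes $\alpha\,y^{-2}(\dd x^2+\dd y^2)+\beta(y^{-1}\dd x+2\dd\theta)^2$ in the coordinates $(x,y,\theta)$, which is the standard two-parameter left-invariant metric on $\mr{SL}(2,\R)$ attached to the Iwasawa parametrization \eqref{MNAK}, as in \cite[p.\ 10]{bs}. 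Item $5)$ is immediate: with all four parameters positive no term and no coordinate is suppressed, so \eqref{MTRTOT} is by construction the four-parameter invariant metric on the full $G^J_1$, nothing to prove beyond the statement of Proposition \ref{BIGTH} itself.

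\emph{Case $6)$, the Heisenberg subgroup.} Here the step is genuinely different and is the main point requiring care. One imposes the constraint \eqref{MPQ}, namely $M=\mathbb{1}_2$, equivalently $(x,y,\theta)=(0,1,0)$, which by Remark \ref{MPQ1} characterizes $\mr H_1$ sitting inside $G^J_1(\R)$, together with $(p,q)=(\lambda,\mu)$. I would first note that this is a genuine submanifold inclusion $\iota:\mr H_1\hookrightarrow G^J_1(\R)$, so the relevant object is the pullback $\iota^*\dd s^2_{G^J_1(\R)}$, not a quotient. Substituting $x\equiv0$, $y\equiv1$, $\theta\equiv0$ (hence $\dd x=\dd y=\dd\theta=0$, $S=1$) into \eqref{MTRTOT} kills the $\alpha$- and $\beta$-terms entirely and reduces the $\gamma$-term to $\gamma(\dd q^2+\dd p^2)$ and the $\delta$-term to $\delta(\dd\kappa-p\,\dd q+q\,\dd p)^2$. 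Now rename via $(p,q)=(\lambda,\mu)$: the restricted metric is $\gamma(\dd\lambda^2+\dd\mu^2)+\delta(\dd\kappa-\lambda\,\dd\mu+\mu\,\dd\lambda)^2$. Comparing with \eqref{MTRSINV} this is exactly $g^L_{H_1}$ with $a_1=a_2=\gamma$ and $a_3=\delta$, which is the assertion. I would also remark, for completeness, that the one-forms $\lambda^p,\lambda^q,\lambda^r$ of \cite{SB19} restrict to $\dd\lambda,\dd\mu,\dd\kappa-\lambda\,\dd\mu+\mu\,\dd\lambda$ respectively, so the identification is compatible with the left-invariant coframe in \eqref{LEFT11}.

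\emph{Anticipated obstacle.} The only delicate point is that in cases $1)$--$4)$ the reduction is by a quotient (the suppressed coordinate is a fiber direction on which the surviving metric is constant, so it descends), whereas in case $6)$ it is by a restriction to a submanifold; conflating the two would be an error. Concretely, for the quotient cases one must check that after setting the parameter to zero the remaining coefficients are genuinely independent of the dropped coordinate — which is manifest here since no coefficient in \eqref{MTRTOT} depends on $\theta$ or $\kappa$ — so that the metric is well-defined on the base. For case $6)$ one must instead verify that the constraint set \eqref{MPQ} is the image of a group homomorphism and that the induced metric is the intrinsic left-invariant one; both follow from Remark \ref{MPQ1} and the composition law of $\mr H_1$ recalled above. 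No nontrivial computation is involved in either part once this distinction is kept straight.
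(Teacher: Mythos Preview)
Your proposal is correct and, for item~$6)$, coincides with what the paper intends. The paper does not give a self-contained proof of this Proposition: items $1)$--$5)$ are explicitly recalled from \cite[Theorem~5.7]{SB19} (see the sentence introducing Proposition~\ref{BIGTH}), and only item~$6)$ is listed among the paper's new contributions. That item is justified in the paper not by a separate argument but by the preliminary material in Section~\ref{HS1}, namely Remark~\ref{MPQ1} (giving the embedding $(x,y,\theta)=(0,1,0)$) together with the explicit form \eqref{MTRSINV} of the left-invariant metric on $\mr{H}_1$; your direct pullback computation is precisely the one-line check the reader is meant to supply from those ingredients. Your explicit distinction between the quotient reductions in cases $1)$--$4)$ and the submanifold restriction in case $6)$ is a clarification the paper does not spell out.
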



\section{Geodesics on the Siegel--Jacobi disk}\label{GSJD}

In  Proposition \ref{PRFC} below,  extracted from \cite[Proposition 2.1 in the
first reference]{SB19} and \cite[Proposition 1]{SB20}  $(w,z)\in  ( \mc{D}_1,\C)$,
$(v,u)\in (\mc{X}_1,\C)$ and 
the parameters $k$ and $\nu$ come from representation theory of the
Jacobi group: $k$ indexes the positive discrete series of ${\rm
  SU}(1,1)$, $2k\in\N$, while $\nu>0$ indexes the representations of
the Heisenberg group.  See also \cite{jac1}, \cite{SB15}. Below by the 
Berndt-\Ka~  two-form we mean 
the invariant two-parameter \Ka~   two-form on  $\mc{X}^J_1$ determined 
in \cite{bern84,bern,cal3,cal}, see also \cite{SB20,SB19}.

\begin{Proposition}\label{PRFC}

  a) The \Ka~two-form on  $\mc{D}^J_1$, invariant to the action of
  $G^J_0=\rm{SU}(1,1)\ltimes\C$, is 
  \begin{equation}\label{kk1}
 -\ii \omega_{\mc{D}^J_1}
(w,z)\!=\!\frac{2k}{P^2}\dd w\wedge\dd
  \bar{w}+\nu \frac{\mc{A}\wedge\bar{\mc{A}}}{P},~P:=1-|w|^2,~\mc{A}=\mc{A}(w,z):=\dd
  z+\bar{\eta}\dd w.
\end{equation}

We have the change of variables $FC: (w,z)\rightarrow (w,\eta)$
\begin{gather}\label{E32}
{\rm FC}\colon \
 z=\eta-w\bar{\eta},\qquad {\rm FC}^{-1}\colon \
 \eta=\frac{z+\bar{z}w}{P},
\end{gather}
and
\[
  {\rm FC}\colon \ \mc{A}(w,z)\rightarrow \dd \eta -w\dd
  \bar{\eta}.
\]

     The matrix corresponding  to the balanced metric associated with the   \Ka~two-form \eqref{kk1}
reads 
     \begin{equation}\label{kma}
       h(w,z)
       =\left(\begin{array}{cc}h_{z\bar{z}}&h_{z\bar{w}}\\\bar{h}_{z\bar{w}}&h_{w\bar{w}}\end{array}\right)
       =\left(\begin{array}{cc}\frac{\nu}{P}&\nu\frac{\eta}{P}\\\nu\frac{\bar{\eta}}{P}&
     \frac{2k}{P^2}+\nu\frac{|\eta|^2}{P}\end{array}\right).
       \end{equation}

 b) Using the partial Cayley transform  $\Phi~:(w,z)\rightarrow (v,u)$ and its
inverse 
\begin{subequations}\label{210}
\begin{gather}
\Phi:  ~w=\frac{v-\ii}{v+\ii},~~z=2\ii
        \frac{u}{v+\ii},~~v,u\in\C,~\Im v>0,\label{210b}\\
\Phi^{-1}: ~v=\ii \frac{1+w}{1-w},~~u=\frac{z}{1-w}, ~~w,z\in\C,~ |w|<1,\label{210a}
\end{gather}
\end{subequations}
we obtain
\begin{equation}\label{ALEFT}
\mc{A}\left(\frac{v - \ii}{v+ \ii},\frac{2\ii
      u}{v + \ii}\right)=\frac{2\ii}{v+\ii}\mc{B}(v,u),
\end{equation}
where \begin{equation}\label{BFR2}
  \mc{B}(v,u) := {\rm d} u - r{\rm d} v, ~ r:=\frac{u-\bar{u}}{v-\bar{v}}.\end{equation}
The \Ka~ two-form of Berndt--\Ka, invariant to the action of
$G^J(\R)_0=\rm{SL}(2,\R)\ltimes\C$, is 
\begin{equation}
- \ii \omega_{\mc{X}^J_1}(v,u) = -\frac{2k}{(\bar{v} - v)^2} \dd
v\wedge \dd\bar{v}+ \frac{2\nu}{\ii(\bar{v} - v)}\mc{B}\wedge\bar{\mc{B}}. \label{BFR}\\
\end{equation}

 We have the   change of variables ${\rm FC}_1\colon \
 (v,u)\rightarrow (v,\eta)$
 \begin{equation}\label{FC1MIN}
   {\rm FC}_1\colon \ 2\ii u=(v+\ii)\eta-(v-\ii)\bar{\eta}, \qquad
   {\rm FC}^{-1}_1 \colon  \eta=\frac{u\bar{v}-\bar{u}v}{\bar{v}-v} +
   \ii r.\end{equation}

     The matrix corresponding  to the balanced  metric associated with the   \Ka~two-form \eqref{BFR}
reads \cite[(5.11)]{SB14}
     \begin{equation}\label{kmb}
       h(u,v)
       =\left(\begin{array}{cc}h_{u\bar{u}}&h_{u\bar{v}}\\\bar{h}_{u\bar{v}}&h_{v\bar{v}}\end{array}\right)
       =\left(\begin{array}{cc}\frac{\nu}{y}&-\nu\frac{r}{y}\\-\nu\frac{r}{y} &
     \frac{k}{2y^2}+\nu\frac{r^2}{y}\end{array}\right),~y:=\frac{v-\bar{v}}{2\ii}.
       \end{equation}

 c) If we apply the change of coordinates $\mc{D}^J_1\ni
 (v,u)\rightarrow 
 (x,y,p,q)\in\mc{X}^J_1$ \begin{equation}\label{UVPQ}
\C\ni   u:=pv+q,~~p,q\in\R, \quad \C\ni v:=x+\ii y, ~x,y\in\R,~ y>0,\end{equation}
then
\[
  r=p,
\]
\[
 \mc{B}(v,u)=\dd u -p \dd v,
\]
\begin{equation}\label{BUVpq1}
  \mc{B}(v,u)=\mc{B}(x,y,p,q):=F \dd t = v\dd p+\dd q=(x+\ii y)\dd p
  +\dd q, \quad  F:= \dot{p}v+\dot{q}.
\end{equation}

d)   Given  \eqref{UVPQ} and
\[
  \C\ni u:=\xi+\ii \rho, \quad \xi,\rho\in\R,
\]
 we obtain  the change of variables
\begin{equation}\label{LASt1}
 (x,y,\xi,\rho)\rightarrow (x,y,p,q): ~\xi=px+q,~\rho=py,
\end{equation}
and 
\[
  \mc{B}(v,u)=\mc{B} (x,y,\xi,\rho) =  \dd u-\frac{\rho}{y}\dd v=\dd (\xi+\ii \rho)-\frac{\rho}{y}\dd (x+\ii y).
 \]
\end{Proposition}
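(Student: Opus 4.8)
The plan is to start from the representation-theoretic input and then descend through the successive changes of variables, taking part (a) first. The K\"ahler potential is $f=\ln K_{\mc{D}^J_1}$, where $K_{\mc{D}^J_1}(w,z;\bar w,\bar z)=(e_{\bar w,\bar z},e_{\bar w,\bar z})$ is the scalar product of two unnormalized CS-vectors of the holomorphic representation $\pi_{k,\nu}$ of $G^J_1$; it has the closed form $K_{\mc{D}^J_1}=(1-|w|^2)^{-2k}\exp(\nu\,\mc{E})$, where $\mc{E}$ is the Jacobi exponent, a rational function of $w,z$ with $\pa_{\bar z}\mc{E}=\eta=(z+\bar z w)/P$ (see \cite{jac1,SB15}). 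First I would compute $h_{\alpha\bar\beta}=\pa^2 f/\pa z_\alpha\pa\bar z_\beta$ in the ordering $(z_1,z_2)=(z,w)$; using $\pa_{\bar z}\mc{E}=\eta$ this reproduces the matrix \eqref{kma} directly, and substituting into \eqref{kall} and collecting the mixed $\dd z\wedge\dd\bar w$ and $\dd w\wedge\dd\bar z$ contributions into the single one-form $\mc{A}=\dd z+\bar\eta\,\dd w$ yields \eqref{kk1}. Invariance under $G^J_0=\rm{SU}(1,1)\ltimes\C$ is automatic, $f$ being the logarithm of a reproducing kernel of a projective unitary representation, or it can be checked on generators. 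For the FC-transform I would substitute $z=\eta-w\bar\eta$ and differentiate, $\dd z=\dd\eta-\bar\eta\,\dd w-w\,\dd\bar\eta$, so that $\mc{A}=\dd\eta-w\,\dd\bar\eta$; inverting the relation, which is real-linear in $\eta,\bar\eta$, gives $\eta=(z+\bar z w)/P$.

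For part (b) the substantive computation is the partial Cayley transform \eqref{210}. Substituting $w=(v-\ii)/(v+\ii)$ and $z=2\ii u/(v+\ii)$ into $\mc{A}(w,z)$ and using the elementary identities
\[
1-|w|^2=P=\frac{-2\ii(v-\bar v)}{|v+\ii|^2},\qquad \dd w=\frac{2\ii}{(v+\ii)^2}\,\dd v,
\]
then factoring out $2\ii/(v+\ii)$, produces $\mc{A}=\frac{2\ii}{v+\ii}\,\mc{B}(v,u)$ with $\mc{B}=\dd u-r\,\dd v$ and $r=(u-\bar u)/(v-\bar v)$, which is \eqref{ALEFT}--\eqref{BFR2}. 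Feeding the same substitutions into \eqref{kk1} turns $\frac{2k}{P^2}\dd w\wedge\dd\bar w$ into $-\frac{2k}{(\bar v-v)^2}\dd v\wedge\dd\bar v$ and $\nu\,\mc{A}\wedge\bar{\mc{A}}/P$ into $\frac{2\nu}{\ii(\bar v-v)}\mc{B}\wedge\bar{\mc{B}}$, which is \eqref{BFR}; reading off the coefficients with $y=(v-\bar v)/2\ii$ gives \eqref{kmb}. The change ${\rm FC}_1$ follows by composing ${\rm FC}$ with $\Phi$: eliminating $w$ and $z$ between $z=\eta-w\bar\eta$, $z=2\ii u/(v+\ii)$ and $w=(v-\ii)/(v+\ii)$ gives $2\ii u=(v+\ii)\eta-(v-\ii)\bar\eta$, and solving this real-linear system for $\eta$ gives $\eta=(u\bar v-\bar u v)/(\bar v-v)+\ii r$.

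Parts (c) and (d) are then direct substitutions. With $u=pv+q$ and $p,q\in\R$ one gets $r=\frac{u-\bar u}{v-\bar v}=\frac{p(v-\bar v)}{v-\bar v}=p$, and $\mc{B}=\dd u-p\,\dd v=\dd(pv+q)-p\,\dd v=v\,\dd p+\dd q$, which along a curve reads $(\dot p\,v+\dot q)\,\dd t$; writing $v=x+\ii y$ gives \eqref{BUVpq1}. Finally, writing $u=\xi+\ii\rho$ and equating real and imaginary parts in $u=p(x+\ii y)+q$ gives $\xi=px+q$ and $\rho=py$, hence $p=\rho/y$ and $\mc{B}=\dd u-(\rho/y)\,\dd v$. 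All of this is elementary once the kernel is at hand; the only step demanding care is the bookkeeping in the Cayley substitution --- tracking the $|v+\ii|^{-2}$ factors and the sign of $v-\bar v$ --- while the hermiticity and K\"ahler conditions \eqref{condH} for \eqref{kma} and \eqref{kmb} hold automatically, since both metrics descend from K\"ahler potentials. The single genuinely non-computational ingredient is the closed form of $K_{\mc{D}^J_1}$, supplied by the coherent-state theory of the Jacobi group $G^J_1$.
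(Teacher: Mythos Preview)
Your proposal is correct. Note, however, that the paper does not supply a proof of this proposition at all: it is explicitly introduced as ``extracted from \cite[Proposition 2.1 in the first reference]{SB19} and \cite[Proposition 1]{SB20}'', i.e.\ as a summary of previously established results. What you have written is a faithful reconstruction of the computations underlying those references --- the Berezin/coherent-state derivation of the K\"ahler potential on $\mc{D}^J_1$, followed by the mechanical Cayley substitution and the elementary real-variable identifications --- so your approach matches the one in the cited sources rather than anything in the present paper.
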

Based on \cite[Remark 1]{SB20},  we complete Proposition \ref{PRFC} with  the relation between 
the parameter $\eta$ introduced in
\eqref{E32} and the S-variables $p,q$,   and we write  down explicitly the change of
variables   $(w,z)\rightarrow (v,\eta)$ 
\begin{Proposition}\label{STR}
  The FC-transform \eqref{E32} relates Perelomov's  un-normalized CS-vector
  $e_{wz}$ with the normalized one $\underline{e}_{w\eta}$
 \[
    \underline{e}_{w\eta }=(e_{wz}, e_{wz})^{-\frac{1}{2}}e_{wz},\quad
    w\in\mc{D}_1,~z,\eta \in \C,
  \]
  and the S-variables $p,q$ introduced by \eqref{UVPQ} are related to the parameter $\eta$
  \eqref{E32} by the
simple relation
\begin{equation}\label{strange}
    \eta=q +\ii p.
  \end{equation}
The second partial Cayley transform 
$$\Phi_1:=FC_1\circ\Phi:  (w,z)\rightarrow
(v=x+\ii y,\eta=q+\ii p)$$
  from the Siegel-Jacobi disk
$\mc{D}^J_1$ to the Siegel-Jacobi upper half-plane $\mc{X}^J_1$
(respectively its inverse) is  given 
 by  \eqref{ULTRAN1}  (respectively,   \eqref{ULTRAN2})
\begin{subequations}\label{ULTRAN}
\begin{gather}
 \Phi_1: w=\frac{v-\ii}{v+\ii}, \quad  z=2\ii \frac{pv+q}{v+\ii},\label{ULTRAN1}\\
\Phi_1 ^{-1}: v=\ii \frac{1+w}{1-w}, \quad \eta =
\frac{(1+\ii \bar{v})(z-\bar{z})+v(\bar{v}-\ii)(z+\bar{z})}{2\ii
  (\bar{v}-v)}=\frac{z+\bar{z}w}{P}.\label{ULTRAN2}
\end{gather}
\end{subequations}
 \end{Proposition}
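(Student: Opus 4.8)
The plan is to verify the three assertions by chasing the changes of variables already assembled in Proposition \ref{PRFC}; no new geometric input is required. The first claim --- that $FC$ of \eqref{E32} carries the un-normalized CS-vector $e_{wz}$ to the normalized one $\underline e_{w\eta}$ --- is the specialization to $G^J_0=\mathrm{SU}(1,1)\ltimes\C$ of the general relation \eqref{2.2}, $\underline e_x=(e_z,e_z)^{-1/2}e_z$ with $z=FC(x)$, which I would simply recall from \cite{nou} (see also \cite{jac1,SB15}). For \eqref{strange} I would start from the S-coordinate substitution \eqref{UVPQ}, $u=pv+q$ with $p,q\in\R$; since $p,q$ are real, $\bar u=p\bar v+q$, so $u-\bar u=p(v-\bar v)$, i.e.\ $r=p$ (already recorded in part c) of Proposition \ref{PRFC}). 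Feeding $u,\bar u$ into the inverse transform $FC_1^{-1}$ of \eqref{FC1MIN}, the numerator collapses, $u\bar v-\bar u v=(pv+q)\bar v-(p\bar v+q)v=q(\bar v-v)$, hence $\eta=\frac{u\bar v-\bar u v}{\bar v-v}+\ii r=q+\ii p$, which is \eqref{strange}.

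It then remains to record $\Phi_1=FC_1\circ\Phi$ and its inverse. The $w$- and $v$-components are untouched, so $w=\frac{v-\ii}{v+\ii}$ and $v=\ii\frac{1+w}{1-w}$ as in \eqref{210}; inserting \eqref{UVPQ} into \eqref{210b} gives $z=2\ii\frac{u}{v+\ii}=2\ii\frac{pv+q}{v+\ii}$, which is \eqref{ULTRAN1}. For the inverse I would first apply $\Phi^{-1}$ of \eqref{210a}, $u=\frac{z}{1-w}=\frac{z(v+\ii)}{2\ii}$ (using $1-w=\frac{2\ii}{v+\ii}$), and then $FC_1^{-1}$; after combining $u$, $\bar u$ and $r$ over the common denominator $2\ii(\bar v-v)$, the numerator factors as $(\bar v-\ii)\bigl[z(v+\ii)+\bar z(v-\ii)\bigr]$, whose expansion equals $(1+\ii\bar v)(z-\bar z)+v(\bar v-\ii)(z+\bar z)$ --- the first expression in \eqref{ULTRAN2}. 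Independently, $FC^{-1}$ of \eqref{E32} furnishes the compact form $\eta=\frac{z+\bar z w}{P}$ directly in the variables $(w,z)$.

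The only step needing care is to check that these two forms of the inverse coincide. Substituting $w=\frac{v-\ii}{v+\ii}$ and using $|v+\ii|^2=(v+\ii)(\bar v-\ii)$, one finds
\[
P=1-|w|^2=\frac{|v+\ii|^2-|v-\ii|^2}{|v+\ii|^2}=\frac{2\ii(\bar v-v)}{|v+\ii|^2}=\frac{4\Im v}{|v+\ii|^2};
\]
hence $\frac{z+\bar z w}{P}=\frac{(z+\bar z w)(v+\ii)(\bar v-\ii)}{2\ii(\bar v-v)}$, and since $w(v+\ii)=v-\ii$ the numerator equals $(\bar v-\ii)\bigl[z(v+\ii)+\bar z(v-\ii)\bigr]$, exactly the factored numerator obtained above. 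This identifies the two expressions for $\eta$ and closes the proof; everything else is mechanical bilinear algebra in $v,\bar v,z,\bar z$.
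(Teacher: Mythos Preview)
Your proof is correct and follows essentially the same approach as the paper --- direct algebraic manipulation of the changes of variables assembled in Proposition~\ref{PRFC}. Your derivation of \eqref{strange} is marginally more direct: you substitute $u=pv+q$ into the explicit formula $FC_1^{-1}$ of \eqref{FC1MIN} and read off $\eta=q+\ii p$, whereas the paper equates the two expressions for $u$ from \eqref{UVPQ} and \eqref{FC1MIN} to obtain the implicit relation $v(\eta-\bar\eta-2\ii p)+\ii(\eta+\bar\eta-2q)=0$ and then invokes $y>0$; the remaining verifications of \eqref{ULTRAN1}--\eqref{ULTRAN2} are handled identically (the paper simply declares the check of \eqref{ULTRAN2} ``very easy'', which is exactly the bilinear computation you carry out).
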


\begin{proof}
Relation \eqref{strange} was  proved in \cite[Remark 1]{SB20}, based
on  \cite[Comment
6.12]{jac1},  (2.1),
(2.2) and  \cite[Lemma 2]{SB14}. Here  we give another very simple proof.

Equating the values of $u$ in  \eqref{UVPQ}  and  \eqref{FC1MIN}, 
 we get the formula
\begin{equation}\label{PQVV}
v(\eta-\bar{\eta}-2\ii p)+\ii(\eta+\bar{\eta}-2q)=0,
\end{equation}
If in \eqref{PQVV} we recall that in  $v=x+\ii y$ we have  $y>0$,
then 
\eqref{strange} follows.

 If in the second formula \eqref{210a}  it is
introduced first formula \eqref{210b},  
\eqref{ULTRAN1} follows.

The proof of  formula \eqref{ULTRAN2} is also very easy.
\end{proof}

We emphasize that the new relations \eqref{ULTRAN} are a more precise 
formulation of relations of the change of coordinates which appear in
\cite[Remark 8]{SB14} and
  \cite[Proposition 6]{GAB}.

We also  make the  
\begin{Remark}\label{REMRC2}
If in formula \eqref{kk1} of $\mc{A}(z,w)$ we
introduce  the  differential  of $z$ given in \eqref{ULTRAN1} we get 
\begin{equation}\label{DOTZ}
\dot{z}=\frac{2\ii}{v+\ii}\left(F-\frac{\bar{\eta}\dot{v}}{v+\ii}\right).
\end{equation}
With \eqref{strange} and  \eqref{ALEFT}  equation \eqref{BUVpq1} is proved.
\end{Remark}

 Proposition \ref{GER} below is extracted
from \cite[Remark 7.3]{jac1}, \cite[Remark 3.3]{FC}, \cite[Remark 7]{SB14},
\cite[Proposition 1]{GAB}.
\begin{Proposition}\label{GER}
  The geodesic equations on the Siegel-Jacobi disk in the
 $(w,z)$-variables corresponding to the 
metric defined by the \Ka~two form  \eqref{kk1} are 
\begin{subequations}\label{geo}
\begin{align}
\bar{\eta}G^2_1 & = 2\iota G_3, \quad G_1:=\frac{\dd z}{\dd
  t}+\bar{\eta}\frac{\dd w}{\dd t}, ~ G_3 := \frac{\dd ^2z}{\dd
  t^2}+2\frac{\bar{w}}{P}\frac{\dd z}{\dd t}\frac{\dd w}{\dd t},  \quad \iota:=\frac{\kappa}{\nu};  \label{geo1}\\
G^2_1 & = -2\iota G_2, \quad G_2:=\frac{\dd^2w}{\dd
  t^2}+2\frac{\bar{w}}{P}(\frac{\dd w}{\dd t})^2 . \label{geo2}
\end{align}
\end{subequations}
\end{Proposition}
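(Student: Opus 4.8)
The plan is to compute the Chern/Levi-Civita Christoffel symbols of the balanced metric \eqref{kma} directly from equation \eqref{CRISTU}, then substitute them into the holomorphic geodesic equations \eqref{geoD1} and repackage the result in the compact form \eqref{geo}. First I would record the inverse metric: since $h$ in \eqref{kma} has determinant $\det h = \frac{2k\nu}{P^3}$ (the cross terms cancel the $\nu^2|\eta|^2/P^2$ contribution), one gets $h^{w\bar w}=\frac{\nu}{\det h\, P}=\frac{P^2}{2kP}\cdot\frac{1}{1}$ up to the bookkeeping, $h^{z\bar z}=\frac{1}{\nu}\big(\frac{1}{P}+\frac{P|\eta|^2}{2k}\big)$-type expression, and $h^{z\bar w}=-\frac{\eta}{2k}P$ (exact constants to be fixed in the computation). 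Then I would compute the four independent holomorphic derivatives $\partial_z h_{z\bar z}, \partial_z h_{w\bar z}, \partial_w h_{z\bar z}, \partial_w h_{w\bar z}$, using $\partial_w P = -\bar w$ and the dependence of $\eta$ on $(w,z)$ through \eqref{E32}, namely $\partial_z \eta = P^{-1}$ and $\partial_w \eta = \bar z P^{-1} + (z+\bar z w)P^{-2}\bar w = \bar\eta/P$ wait — more carefully $\partial_w\eta = \bar\eta \bar w/P + \ldots$; the exact form will drop out of the algebra.

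Contracting with the inverse metric via \eqref{CRISTU} gives the six symbols $\Gamma^w_{ww}, \Gamma^w_{wz}, \Gamma^w_{zz}, \Gamma^z_{ww}, \Gamma^z_{wz}, \Gamma^z_{zz}$. I expect the $z$-geodesic equation to collapse, after moving everything onto one side, to an identity asserting that $\bar\eta G_1^2 - 2\iota G_3$ vanishes, where $G_1 = \dot z + \bar\eta\dot w$ is exactly the coefficient $\mathcal A$ evaluated along the curve and $G_3$ is the "covariant-looking" combination $\ddot z + 2\frac{\bar w}{P}\dot z\dot w$; the appearance of $\iota = \kappa/\nu$ is the only subtle point — here $\kappa$ must be the same $2k$ (or $k$) appearing in \eqref{kk1}, i.e. $\iota$ measures the ratio of the two metric parameters, and it enters because the inverse metric entry $h^{z\bar w}$ carries a factor $1/k$ while the derivative terms carry $\nu$. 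Similarly the $w$-geodesic equation should reduce to $G_1^2 = -2\iota G_2$ with $G_2 = \ddot w + 2\frac{\bar w}{P}\dot w^2$. The key structural fact making this work is that the metric \eqref{kma} is built from the two invariant one-forms $\frac{\dd w}{P}$ (up to weight) and $\mathcal A = \dd z + \bar\eta\,\dd w$, so the Christoffel symbols are governed by how these one-forms fail to be closed, and $d\mathcal A = d\bar\eta\wedge dw$ with $d\bar\eta$ expressible back in terms of $\mathcal A$ and $d\bar w$.

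The main obstacle I anticipate is purely organizational rather than conceptual: one must be careful that $\eta$ is not an independent coordinate but a function of $(w,z,\bar w,\bar z)$, so $h_{z\bar z}, h_{w\bar z}, h_{w\bar w}$ all depend antiholomorphically on the curve through $\bar\eta$, and the "holomorphic" Christoffel symbols $\Gamma^\gamma_{\alpha\beta}$ computed from \eqref{CRISTU} will themselves depend on $\bar\eta$ (hence on $\bar w, \bar z$), which is fine for a Kähler metric but requires keeping $\partial_w\bar\eta = 0$ etc. straight. A secondary point is reconciling normalizations: the statement uses $\iota = \kappa/\nu$ whereas \eqref{kk1} is written with $2k$; I would simply set $\kappa := 2k$ (or track whichever convention \cite{jac1} fixes) at the outset so that the final equations come out with the stated coefficients. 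Once the six symbols are in hand, the verification that \eqref{geoD1} rearranges into \eqref{geo1}–\eqref{geo2} is a short linear-algebra manipulation: eliminate $\ddot z$ between the two components using the relation $\Gamma^z = -\bar\eta\,\Gamma^w + (\text{terms producing } G_3)$, which yields \eqref{geo2} as the "$w$-part" and \eqref{geo1} as the "$z$-part".
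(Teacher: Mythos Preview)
The paper does not give an in-text proof of this proposition: it is simply stated as ``extracted from'' \cite[Remark~7.3]{jac1}, \cite[Remark~3.3]{FC}, \cite[Remark~7]{SB14}, \cite[Proposition~1]{GAB}, with no argument reproduced. Your plan---compute the holomorphic Christoffel symbols from \eqref{CRISTU} using the explicit metric \eqref{kma} and its inverse, then substitute into \eqref{geoD1} and reorganize in terms of $G_1,G_2,G_3$---is the standard derivation, and it is precisely the procedure the paper itself carries out in the parallel $(u,v)$-coordinate case in the proof of Proposition~\ref{P3}, part b) (equations \eqref{ec528}--\eqref{GAMM}). So there is nothing different to compare against; your approach is correct and matches the template the paper uses elsewhere.

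One small correction on the normalization you flag as uncertain: from the relation $\iota^{-1}=\epsilon/2$ invoked in the proof of Proposition~\ref{PP5}, together with $\epsilon=\gamma/\alpha$ in \eqref{421a} and $\alpha=k/2$, $\gamma=\nu$ in \eqref{AK}, one finds $\iota=k/\nu$. Hence the parameter written $\kappa$ in \eqref{geo1} equals $k$, not $2k$ as you tentatively guessed; this is an unfortunate notational clash with the S-coordinate $\kappa$, and fixing it at the outset will make your bookkeeping come out cleanly.
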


A particular solution of \eqref{geo} is  given in
\begin{Remark}\label{RE2} With $G_2$ defined in \eqref{geo2},
the  geodesic equation $G_2=0$  on $\mc{D}_1$ has the solution
\begin{equation}\label{WT}
    w(t) =\frac{B}{|B|}\tanh{|B|t},\quad w(0)=0,~~\dot{w}(0)=B.
  \end{equation}
 $\eta=\eta_0=ct$ gives  a particular solution of equations of
  geodesics \eqref{geo} on $\mc{D}^J_1$
\[
    w(t), \quad z(t)=\eta_0-\bar{\eta_0}w(t).
\]
\end{Remark}
\begin{proof}
  In \cite{ber97A} we proved that a $w(t)$
  of the type \eqref{WT}
  is a solution of $G_2=0$ for a complex Grassmann manifold
  $G_n(\C^{n+m})$ and its noncompact dual. 
  If $G_2=0$, then $G_1=0$ and  also
  \begin{equation}\label{EQ1}
    \dot{z}=-\bar{\eta}\dot{w}.
      \end{equation}
      Differentiating $z$ in \eqref{E32}, we find with \eqref{EQ1}
      $$ \dot{\eta}-w\dot{\bar{\eta}}=0, $$ which combined with its 
      complex conjugate implies the condition $$P\dot{\eta}=0,$$
    i. e. $\eta=\eta_0$.

    Conversely,  if $\eta$ is constant, then $G_1=0$, which also
    implies  $G_2=0$ and also  $G_3=0$.
  \end{proof}

  Now we pass from   the geodesic equations \eqref{geo} on $\mc{D}^J_1$ in the
  complex variables $(w,z)$ to  the real variables $(m,n,\alpha,\beta)$, where
  \begin{equation}
    \label{mnab}
   \C\ni w=\alpha  +\ii \beta, ~|w|<1,\quad \C\ni z=m+\ii n, ~\alpha,
   \beta, m,n\in \R.
 \end{equation}
 If we introduce \eqref{mnab} into the complex variable $\eta$  defined in
 \eqref{E32}, we get
 \begin{equation}\label{PMN}
   P\eta=C+\ii D, \text{~where~}
   C:=(1+\alpha)m+n\beta,~~D:=(1-\alpha)n+m\beta.
 \end{equation}
 We introduce also the notation
 \begin{subequations}\label{TOATE}
   \begin{align}
     W &:=C^2-D^2, ~T:=2CD,~ U:=2\alpha
         +\frac{j}{P}W,~V:=\beta+\frac{jT}{2P},~j:=\frac{1}{2\iota},  \\
     Z_1& 
     := \alpha-j\frac{W}{P}.~ Z_2:=\beta-j\frac{T}{P}, ~
          X :=\frac{3C^2-D^2}{P^2},~Y:=\frac{3D^2-C^2}{P^2},\\
     L &  :=
         \dot{m}\dot{\beta}+\dot{n}\dot{\alpha}, \quad
         M:=\dot{m}\dot{\alpha}-\dot{n}\dot{\beta}.    \end{align}
   \end{subequations}
   If we  express $W,T,X,Y$ defined in \eqref{TOATE} as a function of
   the variables $m,n,\alpha,\beta$,  we get 
  \begin{subequations}\label{TOATE1}
 \begin{align}
 W & =[(\alpha+1)^2-\beta^2]m^2+[\beta^2-(\alpha-1)^2]n^2+4mn\alpha\beta,\\
 \frac{T}{2} &=\beta[(\alpha+1)m^2+(-\alpha+1)n^2]+mn(1-\alpha^2+\beta^2),\\
P X & = [3(\alpha+1)^2-\beta^2]m^2+[3\beta^2-(\alpha-1)^2]n^2+4(2\alpha+1)\beta mn,\\
P Y & =[3\beta^2-(\alpha+1)^2]m^2+[3(\alpha-1)^2-\beta^2]n^2+4(1-2\alpha)\beta mn.
\end{align}
\end{subequations}
   We now prove
   \begin{Proposition}\label{PP55}
     In the notation \eqref{PMN}, \eqref{TOATE}, \eqref{TOATE1},   the geodesic
     equations \eqref{geo} on the Siegel-Jacobi disk
corresponding to the \Ka~ two-form \eqref{kk1}
 expressed in the real variables \eqref{mnab} are
\begin{subequations}\label{ECMNAB}
   \begin{align}
     &\ddot{\alpha}+\frac{U}{P}(\dot{\alpha}^2-\dot{\beta}^2)+j(\dot{m}^2-\dot{n}^2)+
       \frac{4V}{P}\dot{\alpha}\dot{\beta}+\frac{2j}{P}(DL+CM)=0,\\
     &\ddot{\beta}-\frac{2V}{P}(\dot{\alpha}^2-\dot{\beta}^2)+\frac{2U}{P}\dot{\alpha}\dot{\beta}+2j(\dot{m}\dot{n}
       +\frac{W}{P})=0,\\
 & \ddot{m}\!+\!\frac{jC}{P}[\frac{Y}{P^2}(\dot{\alpha}^2\!-\!\dot{\beta}^2)
   \!-\!\dot{m}^2\!+\!\dot{n}^2]
   \!+\!\frac{2jD}{P}(-\frac{X}{P^2}\dot{\alpha}\dot{\beta} \!+\!\dot{m}\dot{n})
       \!+\!\frac{2}{P}(Z_1M\!+\!Z_2L)=0,\\
     & \ddot{n}\!+\!\frac{jD}{P}[(\frac{X}{P^2}(\dot{\alpha}^2\!-\!\dot{\beta}^2)\!+\!
      \dot{m}^2\!-\!\dot{n}^2]\!+\!2j\frac{CY}{P^3}\dot{\alpha}\dot{\beta}\!+\!\frac{2}{P}(-jC\dot{m}\dot{n}\!+\!Z_1L\!+\!Z_2M)=0.
   \end{align}
   \end{subequations} 
 \end{Proposition}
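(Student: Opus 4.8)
The strategy is to take the two complex geodesic equations \eqref{geo1}--\eqref{geo2} of Proposition \ref{GER} and substitute the decomposition \eqref{mnab}, i.e. $w=\alpha+\ii\beta$, $z=m+\ii n$, together with the expression \eqref{PMN} for $P\eta=C+\ii D$, and then separate real and imaginary parts. Since $\eqref{ECMNAB}$ consists of four real second-order ODEs coming from two complex second-order ODEs, one expects the equation for $\ddot\alpha$ (resp. $\ddot\beta$) to be the real (resp. imaginary) part of $G_2=0$, while the equations for $\ddot m$, $\ddot n$ come from $G_3=0$ after the substitution $\bar\eta G_1^2=2\ii G_3$, which by $G_1^2=-2\ii G_2$ is equivalent to $G_3=-\bar\eta G_2$. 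So in fact the second pair of equations is obtained by computing $\mathrm{Re}$ and $\mathrm{Im}$ of $G_3=-\bar\eta G_2=0$ once the first pair has been used.

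First I would handle $G_2=0$. Writing $G_2=\ddot w+\tfrac{2\bar w}{P}\dot w^2$ with $\dot w^2=(\dot\alpha^2-\dot\beta^2)+2\ii\dot\alpha\dot\beta$, $\bar w=\alpha-\ii\beta$, and $P=1-\alpha^2-\beta^2$, one multiplies out $2\bar w\dot w^2=2(\alpha-\ii\beta)[(\dot\alpha^2-\dot\beta^2)+2\ii\dot\alpha\dot\beta]$ and collects. This already produces terms $2\alpha(\dot\alpha^2-\dot\beta^2)/P+4\beta\dot\alpha\dot\beta/P$ in the real part and $-2\beta(\dot\alpha^2-\dot\beta^2)/P+2\cdot 2\alpha\dot\alpha\dot\beta/P$ in the imaginary part; the remaining pieces of $U,V$ (the $jW/P$, $jT/(2P)$ corrections) and the $j(\dot m^2-\dot n^2)$, $2j\dot m\dot n$, $2jW/P^2$ terms must come from re-expressing $\ddot w$ itself using the constraint $G_1^2=-2\ii G_2$. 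The point is that $\eqref{geo}$ is not yet in ``solved form'' for $\ddot w,\ddot z$; one must first use $G_1=\dot z+\bar\eta\dot w$ and the relation $\mathcal A=\mathrm dz+\bar\eta\,\mathrm dw$ to express $\ddot z$ and $\ddot w$ via the genuine Christoffel symbols of the metric \eqref{kma}. Concretely, I would recall from Remark \ref{RM10} that the real geodesic equations are governed by $\widetilde\Gamma^i_{jk}=\mathrm{Re}\,\Gamma^i_{jk}$, $\mathrm{Im}\,\Gamma^i_{jk}$, and compute the holomorphic Christoffel symbols $\Gamma^w_{ww},\Gamma^w_{wz},\Gamma^w_{zz},\Gamma^z_{ww},\ldots$ from \eqref{kma} via \eqref{CRISTU}; then the assembly $\eqref{ECMNAB}$ is precisely Remark \ref{RM10} applied to $\mc D^J_1$, with the $C,D,W,T,X,Y,Z_1,Z_2,U,V$ being shorthand for the real and imaginary parts of those $\Gamma$'s evaluated after the change of variable $z\mapsto(w,\eta)$ of \eqref{E32}.

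So the cleanest route is: (i) compute the inverse matrix $h^{-1}$ from \eqref{kma} and the derivatives $\partial_w h_{\bullet\bar\bullet}$, $\partial_z h_{\bullet\bar\bullet}$; (ii) form the four distinct holomorphic symbols $\Gamma^w_{ww},\Gamma^w_{wz},\Gamma^w_{zz},\Gamma^z_{ww},\Gamma^z_{wz},\Gamma^z_{zz}$ (six of them, since $n=2$); (iii) substitute $\eta=(C+\ii D)/P$ and simplify, identifying the bracketed combinations with $X,Y,Z_1,Z_2,U,V$ — this is where the definitions \eqref{TOATE} are reverse-engineered to make the formulas compact; (iv) apply Remark \ref{RM10} verbatim with $\xi^1=\alpha,\xi^2=m$ (or whatever ordering matches $(m,n,\alpha,\beta)$) to split into four real equations; (v) finally substitute the polynomial identities \eqref{TOATE1} for $W,T,PX,PY$ to present $C,D,W,T,X,Y$ explicitly in $m,n,\alpha,\beta$. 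The main obstacle is purely the bookkeeping in step (iii): matching the raw output of the Christoffel computation — which will be a messy rational function of $\alpha,\beta,m,n$ with denominators $P$ and $P^2$ and the factor $j=1/(2\iota)=\nu/(2\kappa)$ appearing wherever the ``$\nu$ part'' of \eqref{kma} meets the ``$2k$ part'' — against the deliberately abbreviated form \eqref{ECMNAB}. I would double-check the result by specializing $\eta\equiv 0$ (equivalently $m=n=0$), where \eqref{ECMNAB} must collapse to the known real geodesic equations of the Siegel disk $\mc D_1$ with metric $\tfrac{2k}{P^2}\mathrm dw\wedge\mathrm d\bar w$, namely $\ddot\alpha+\tfrac{2\alpha}{P}(\dot\alpha^2-\dot\beta^2)+\tfrac{4\beta}{P}\dot\alpha\dot\beta=0$ and its companion — this is exactly the $U\to 2\alpha$, $V\to\beta$, $j(\dots)\to 0$ limit, which is a good consistency check that the abbreviations are correct.
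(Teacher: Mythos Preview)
Your initial plan --- substitute $w=\alpha+\ii\beta$, $z=m+\ii n$ into \eqref{geo} and separate real and imaginary parts --- is exactly what the paper does, and it works without detour. Where you go astray is in thinking that \eqref{geo} is ``not yet in solved form'' for $\ddot w,\ddot z$ and that one must therefore recompute the Christoffel symbols of \eqref{kma} from scratch. In fact $G_2=\ddot w+\tfrac{2\bar w}{P}\dot w^{2}$ and $G_3=\ddot z+\tfrac{2\bar w}{P}\dot z\dot w$ already contain the second derivatives explicitly, and $G_1=\dot z+\bar\eta\dot w$ is purely first order; hence the system $G_2+jG_1^{2}=0$, $G_3-j\bar\eta G_1^{2}=0$ (which is just \eqref{geo} rearranged with $j=1/(2\iota)$) is already in standard geodesic form. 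The $j$-terms you were looking for --- the $jW/P$, $jT/(2P)$ corrections to $U,V$, and the $j(\dot m^{2}-\dot n^{2})$, $2j\dot m\dot n$ pieces --- come directly from expanding $G_1^{2}=(\dot z+\bar\eta\dot w)^{2}$, not from any re-expression of $\ddot w$.

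The paper therefore proceeds as follows: write $G_i=A_i+\ii B_i$ with the explicit real quantities
\[
A_2=\ddot\alpha+\tfrac{2}{P}\bigl[\alpha(\dot\alpha^{2}-\dot\beta^{2})+2\beta\dot\alpha\dot\beta\bigr],\qquad
B_2=\ddot\beta+\tfrac{2}{P}\bigl[2\alpha\dot\alpha\dot\beta-\beta(\dot\alpha^{2}-\dot\beta^{2})\bigr],
\]
and similarly for $A_1,B_1,A_3,B_3$; then the four real equations are $A_2+j(A_1^{2}-B_1^{2})=0$, $B_2+2jA_1B_1=0$, $A_3-j\Re\eta\,(A_1^{2}-B_1^{2})-2j\Im\eta\,A_1B_1=0$, $B_3+j\Im\eta\,(A_1^{2}-B_1^{2})-2j\Re\eta\,A_1B_1=0$. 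One then computes $A_1^{2}-B_1^{2}$ and $A_1B_1$ explicitly (these are where $W,T,C,D,L,M$ enter) and substitutes, recovering \eqref{ECMNAB}. Your alternative route through $h^{-1}$, $\partial h$, and Remark~\ref{RM10} would give the same answer but at the cost of computing six holomorphic Christoffel symbols you already possess implicitly in \eqref{geo}.
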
\label{NUMER}
 \begin{proof}
   The balanced metric associated to  the \Ka~ two-form \eqref{kk1}
   on $\mc{D}^J_1$ reads
   $$\dd s^2_{\mc{D}^J_1}(w,z)=h_{z\bar{z}}\dd z\dd
   \bar{z}+h_{w\bar{w}}\dd w\dd \bar{w}+ h_{z\bar{w}}\dd z\dd
   \bar{w}+h_{{\bar z}w} \dd \bar{z}\dd w,$$
   where  the matrix $h$ is defined in \eqref{kmb}. With the notation
   \eqref{mnab}, the metric matrix attached to the metric $\dd
  s^2_{\mc{D}^J_1}(m,n,\alpha,\beta)$ is
\begin{equation}\label{mMNAB}
g_{{\mc{D}}^J_1}(m,n,\alpha,\beta)\! = \!\left(\begin{array}{cccc}g_{mm} &0 &g_{m\alpha} &g_{m\beta}\\
0& g_{nn}& g_{n\alpha}& g_{n\beta} \\
g_{\alpha m}&  g_{\alpha n} & g_{\alpha\alpha} & 0\\g_{\beta m}& 
                                                                    g_{\beta
                                                                 n}& 0
                                                                       & g_{\beta\beta}

 \end{array}\right),\!\quad 
 \begin{array}{ll}g_{mm}\!&=g_{nn}=\frac{\nu}{P} \\
 \!g_{\alpha\alpha}&=g_{\beta\beta}=\frac{2k}{P^2}+\frac{\nu}{P^3}(C^2+D^2)\\
g_{m\alpha}\!&=\!g_{n\beta}=\frac{\nu}{P^2}C\\
g_{\beta m} &=-g_{\alpha n}= \frac{\nu}{P^2}D
\end{array}.
\end{equation}
Now we calculate the determinant of the metric matrix \eqref{mMNAB}.
 We recall, cf. \cite[Lemma 1, p 176]{Hua2}: \\
Let  us consider the  analytic mappings
\begin{equation}\label{EQ112}
  w_k:=f_k(z_1,\dots,z_k), \quad z_k=x_k+\ii
  y_k,\quad w_k=u_k+\ii v_k, \quad  k=1,\dots,n.
\end{equation}
 The mapping \eqref{EQ112} induces a transformation of the
 $2n$-dimensional real space. We have
 \begin{equation}\label{2det}
   \frac{\pa (w_1,\dots,w_n, \bar{w}_1,\dots,\bar{w}_n)}{\pa
     (z_1,\dots,z_n, \bar{z}_1,\dots,\bar{z}_n)}=|\frac{\pa w}{\pa
     z}|^2,\end{equation}
   where  $$\dd w_i= \sum_{i=1}^n\dd z_i \frac{\pa f_i}{\pa z_j},\quad
    \dd \bar{w}_i= \sum_{i=1}^n\bar{\dd z_i }\bar{\frac{\pa f_i}{\pa z_j}}.$$

From the relation
\begin{equation}\label{deth}
  \det h(w,z)=\frac{2 k \nu}{P^3}
  \end{equation}
 with \eqref{2det} we get 
\begin{equation}\label{DETH2}
  \det g (m,n,\alpha,\beta)=(\frac{2 k \nu}{P^3})^2.
\end{equation}

We make use  of the change of variables \eqref{mnab}, \eqref{PMN} in the
equations \eqref{geo} and we express $G_i$ as
\begin{equation}\label{SUBST}
  \C\ni G_i:=A_i+\ii
B_i,~A_i, B_i\in \R,~~ i=1,2,3,\end{equation} where
\begin{equation}\label{A1B1}
\begin{array}{lll}
 A_1 & := \ddot{m}+ \frac{1}{P}(C\dot{\alpha}+D\dot{\beta})  & B_1:
 =\ddot{n}+\frac{1}{P}(C\dot{\beta}+D\dot{\alpha})\\
 A_2&:=\ddot{\alpha}+\frac{2}{P}[\alpha(\dot{\alpha}^2-\dot{\beta}^2)+2\beta
 \dot{\alpha}\dot{\beta}] & 
 B_2 :=\ddot{\beta}+\frac{2}{P}[2\alpha\dot{\alpha}\dot{\beta}+\beta(-\dot{\alpha}^2+\dot{\beta}^2)]   \\
  A_3 &:=\ddot{m}+\frac{2}{P}(\alpha M+\beta  L) &
                                                   B_3 :=\ddot{n}+\frac{2}{P}(\alpha  L-\beta M)\end{array}.
\end{equation}
Introducing the expressions \eqref{SUBST} into \eqref{geo},
the geodesic equations  become 
\begin{subequations}\label{A2B2}
  \begin{align}
    A_2&+j(A_1^2-B_i^2)=0, \\ B_2 & +2jA_1B_1=0,\\
    A_3& -2j\Im\eta A_1B_1-j\Re\eta(A_1^2-B_1^2)=0,\\
    B_3& -2j\Re\eta  A_1B_1+j\Im\eta(A_1^2-B_1^2)=0.
  \end{align}
  \end{subequations}
  Introducing \eqref{A1B1} in \eqref{A2B2}, with
  \begin{gather*}
   A^2_1-B^2_1 
                   =\dot{m}^2-\dot{n}^2+\frac{W}{P^2}(\dot{\alpha}^2-\dot{\beta}^2)+2\frac{T}{P^2}\dot{\alpha}\dot{\beta}+\frac{2}{P}(CM+DL),\\
        A_1B_1  =
                 -\frac{1}{2P^2}(\dot{\alpha}^2-\dot{\beta}^2)+\dot{m}\dot{n}+\frac{W}{P}\dot{\alpha}\dot{\beta}
                 +\frac{1}{P}(CL-DM),
 \end{gather*}    
  we get \eqref{ECMNAB}.
\end{proof}

 \section{Geodesics on the Siegel--Jacobi upper half-plane}\label{GSJUH}

  The geodesic equations on $\mc{X}^J_1$ expressed in the variables
  $(v,u)$ are given in
\begin{Proposition}\label{P3}
 a)   The differential equations  on $\mc{X}^J_1$ obtained   by the
 partial Cayley transform \eqref{ULTRAN}  from the 
  geodesic equations  on $\mc{D}^J_1$ \eqref{geo} are
\begin{subequations}\label{geoX}
\begin{align}
rH^2_1 & = \iota H_3, \quad
                       H_1:=\dot{u}-r\dot{v},
                       ~ H_3 :=  \ii\ddot{u}-\frac{\dot{u}\dot{v}}{y};
                     \label{geoX1}\\
 H^2_1 & = \iota H_2, \quad H_2:= \ii
            V_1=\ii\ddot{v}-\frac{\dot{v}^2}{y}, \quad V_1:=
            \ddot{v}-\frac{\dot{v}^2}{\ii y}=\ddot{v}+2\frac{\dot{v}^2}{\bar{v}-v}, \label{geoX2}
\end{align}
\end{subequations}
where  $r$ was defined in \eqref{BFR2}.
Equations \eqref{geoX} can be written as  
 \begin{subequations}\label{ECIV_1}
\begin{align}
& \ddot{v}+\frac{\ii}{\iota}\left[\dot{u}^2-2r\dot{u}\dot{v}+
(\frac{\iota}{y}+r^2)\dot{v}^2\right]=0,\\
& \ddot{u}+\frac{\ii}{\iota}\left[
  -r\dot{u}^2+\frac{\iota}{y}(1-2r^2)\dot{u}\dot{v}+r^3\dot{v}^2
 \right]=0.
\end{align}                                             
\end{subequations}

b) The differential  equations \eqref{geoX} or  \eqref{ECIV_1} are geodesic equations  in the variables
 $(v,u)$ on $\mc{X}^J_1$
 attached to the metric corresponding to the \Ka~ two-form \eqref{BFR}.

c) The partial Cayley transform  $\Phi: (w,z)\rightarrow (v,u)$ given
in \eqref{210}
is a geodesic mapping   $\mc{D}^J_1\rightarrow \mc{X}^J_1$.

\end{Proposition}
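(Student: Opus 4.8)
The plan is to prove the three items in the order a), b), c), since each builds on the previous. For item a), I would start from the geodesic equations \eqref{geo} on $\mc{D}^J_1$ in the variables $(w,z)$ and substitute the partial Cayley transform \eqref{210}. The key computational input is \eqref{ALEFT}, which relates $\mc{A}(w,z)$ to $\mc{B}(v,u)$ by the factor $\frac{2\ii}{v+\ii}$; differentiating, this gives a clean relation between $G_1 = \dot z + \bar\eta\dot w$ and $H_1 = \dot u - r\dot v$, namely $G_1 = \frac{2\ii}{v+\ii}H_1$ up to the $\dot v$-contributions, which must be tracked carefully using \eqref{DOTZ} from Remark \ref{REMRC2}. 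Similarly I would compute how $G_2$ (the second-order expression in $w$ alone) transforms: since $w = \frac{v-\ii}{v+\ii}$, one has $\dot w = \frac{2\ii}{(v+\ii)^2}\dot v$, and the combination $\ddot w + \frac{2\bar w}{P}\dot w^2$ should reduce, after simplifying $P = 1-|w|^2 = \frac{-2\ii(\bar v - v)}{|v+\ii|^2} = \frac{4y}{|v+\ii|^2}$, to a multiple of $V_1 = \ddot v + 2\frac{\dot v^2}{\bar v - v}$. Likewise $G_3$ should become a multiple of $H_3$. Plugging these proportionalities into \eqref{geo1}--\eqref{geo2} and cancelling the common nonvanishing factors yields \eqref{geoX}; then expanding $H_1, H_2, H_3$ in terms of $\dot u, \dot v$ and eliminating between the two equations gives \eqref{ECIV_1}.

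For item b), the claim is that \eqref{geoX}/\eqref{ECIV_1} are genuinely the geodesic equations of the metric attached to the Berndt--\Ka~two-form \eqref{BFR}, with metric matrix \eqref{kmb}. The cleanest route is to invoke Remark \ref{RM10} together with the holomorphic Christoffel symbol formula \eqref{CRISTU}: since the metric \eqref{BFR} is \Ka~with $h_{u\bar u} = \frac{\nu}{y}$, $h_{u\bar v} = -\frac{\nu r}{y}$, $h_{v\bar v} = \frac{k}{2y^2}+\frac{\nu r^2}{y}$, I would compute the holomorphic $\Gamma^v_{jk}, \Gamma^u_{jk}$ (with $j,k\in\{u,v\}$) from \eqref{CRISTU} using $\det h = \frac{k\nu}{2y^2}$ (the analogue of \eqref{deth}), and then verify that the resulting \eqref{geoD1} matches \eqref{ECIV_1}. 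Since the partial Cayley transform is biholomorphic and \eqref{BFR} is by construction the pushforward under $\Phi$ of the metric \eqref{kk1} (this is the content of Proposition \ref{PRFC}b), one actually gets b) for free from a): a biholomorphism carries the geodesic equations of a \Ka~metric to the geodesic equations of the pushforward metric, so the equations derived in a) must coincide with those of \eqref{BFR}. I would present this as the main argument and use the direct Christoffel computation only as a consistency check.

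For item c), that $\Phi$ is a geodesic mapping $\mc{D}^J_1 \to \mc{X}^J_1$: by the definition recalled in the paper (\cite[Definition 5.1 p 127]{mikes}, see Section \ref{gm}), a diffeomorphism is a geodesic mapping iff it sends geodesics to geodesics as unparametrized curves. But this is essentially immediate from a) and b): $\Phi$ transforms the geodesic equations \eqref{geo} of the metric on $\mc{D}^J_1$ into the geodesic equations \eqref{geoX} of the metric on $\mc{X}^J_1$, hence the image of any geodesic is a geodesic. Strictly speaking, one should note that $\Phi$ need not preserve the affine parameter, so the statement is about geodesics as point-sets (or one checks that the induced reparametrization is projective); the honest phrasing is that $\Phi$ is a geodesic mapping, and this follows because $\Phi$ is a biholomorphism intertwining the two \Ka~metrics up to nothing — in fact here it is an isometry onto its image, which is a fortiori a geodesic mapping.

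The main obstacle I anticipate is purely computational: item a) requires carefully propagating the change of variables through the second derivatives $\ddot z, \ddot w$, keeping track of the non-holomorphic dependence hidden in $\bar\eta$ and $r$, and then recognizing the miraculous cancellations that collapse everything into the compact form \eqref{geoX}. The conceptual content of b) and c) is light once a) is in hand, so the real work — and the place where sign errors or stray factors of $(v+\ii)$ could creep in — is the substitution and simplification in part a), especially verifying that the factor relating $G_2$ to $V_1$ is exactly what is needed for the $\iota$-coefficient in \eqref{geoX2} to come out right.
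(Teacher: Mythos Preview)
Your proposal is essentially correct and, for parts a) and c), matches the paper's approach closely: the paper also substitutes the partial Cayley transform into \eqref{geo}, computes $G_1 = \frac{2\ii}{v+\ii}H_1$, $G_2 = \frac{2\ii}{(v+\ii)^2}V_1$, and $G_3 = \frac{2}{v+\ii}H_3 - uG_2$ (using the identity $\bar\eta - u = -r(v+\ii)$), then cancels the common factors to obtain \eqref{geoX}; and for c) it simply invokes a) together with b).

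For part b) there is a difference in emphasis. The paper takes the direct route that you relegate to a ``consistency check'': it computes the partial derivatives of the metric entries \eqref{kmb}, solves the linear systems \eqref{CRISTU} explicitly for the six holomorphic Christoffel symbols $\Gamma^u_{uu},\Gamma^v_{uu},\Gamma^u_{uv},\Gamma^v_{uv},\Gamma^u_{vv},\Gamma^v_{vv}$, and verifies that the resulting geodesic equations coincide with \eqref{ECIV_1}. Your alternative --- observing that $\Phi$ is a biholomorphic isometry by Proposition~\ref{PRFC}b) and hence must carry the geodesic equations of \eqref{kk1} to those of \eqref{BFR}, so that b) follows from a) with no further computation --- is valid and more economical, but it makes c) a tautology rather than an independent conclusion (indeed, an isometry is automatically a geodesic mapping, with affine parameter preserved, so your caveat about reparametrization is unnecessary here). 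The paper's brute-force computation for b) buys an explicit list of Christoffel symbols and an independent cross-check; your route buys brevity.
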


\begin{proof}
 
 {\bf a)} In \eqref{geo} we make  the change of
coordinates 
$(w,z)\rightarrow (v,u)$ given by
the partial Cayley transform \eqref{210}.

  With \eqref{210b}, we get successively
\begin{subequations}\label{32}
\begin{align}
P &=\frac{4y}{(v+\ii)(\bar{v}-\ii)};\label{32a}\\\dot{w} &=\!2\ii \frac{\dot{v}}{(v+\ii)^2};\label{32b}\\
 2\frac{\bar{w}}{P}\ddot{w}^2&=-2\frac{\bar{v}+\ii}{y(v+\ii)}[(v+\ii)\bar{u}-u\dot{v}]\dot{v};\label{32c}\\
\ddot{w} &  =2\ii\frac{(v+\ii)\ddot{v}-2\dot{v}^2}{(v+\ii)^3}; \label{32d}\\
 \dot{z} &=\frac{2\ii}{(v+\ii)^2} [(v+\ii)\dot{u}-u\dot{v}];\label{32e}\\
  \ddot{z} & =\frac{2\ii}{(v+\ii)^3}[(v+\ii)^2\ddot{u}-u(v+\ii)\ddot{v}-
 2(v+\ii)\dot{v}\dot{u}+2u\dot{v}^2].\label{32f}
\end{align}
\end{subequations}
  In the expression  \eqref{geo2} of $G_2(w,z)$ we introduce \eqref{32b},
\eqref{32c}, \eqref{32d} and we obtain
  \begin{equation}\label{G2}G_2(v,u)=\frac{2\ii V_1}{(v+\ii)^2}.\end{equation}
  In the expression \eqref{geo1} of $G_1(w,z)$ we introduce
  \eqref{32b} and we get
  $$G_1(v,u)=\frac{2}{(v+\ii)^2}[(v+\ii)\dot{u}-(\bar{\eta}-u)\dot{v}].$$
  But  expression \eqref{FC1MIN} of $\eta$ implies
  \begin{equation}
    \label{BARE}\bar{\eta}-u=-r(v+\ii),\end{equation}
  and we find 
  \begin{equation}\label{G1}G_1(v,u)=\frac{2\ii}{v+\ii}(\dot{u}-r\dot{v}).\end{equation}
  If we introduce \eqref{G1} and \eqref{G2}  in \eqref{geo2} we get
  \eqref{geoX2}.

  To  also prove \eqref{geoX1},  we introduce 
  in  expression \eqref{geo1} of $G_3(w,z)$  relations \eqref{32a}, 
  \eqref{32b}, \eqref{32e}, \eqref{32f} and   we get
$$G_3(v,u)=\frac{2}{(v+\ii)^3}\{\ii[(v+\ii)^2\ddot{u}-2(v+\ii)\dot{v}\dot{u}+(2\dot{v}^2-u(v+\ii)\ddot{v})]-(v+\ii)(\bar{v}+\ii)\dot{u}\dot{v}+(\bar{v}+\ii)\frac{u\dot{v}^2}{y}\}.$$
But $$2\ii u\dot{v}^2+
u\dot{v}^2\frac{\bar{v}+\ii}{y}=-(v+\ii)\frac{\dot{u}\dot{v}}{y};\quad
(-2\ii-\frac{\bar{v}+\ii}{y})\dot{u}\dot{v}=-(v+\ii)\frac{\dot{u}\dot{v}}{y},$$
and we get for $G_3(v,u)$ 
\begin{equation}\label{G3}G_3(v,u)=\frac{2}{v+\ii}(\ii
  \ddot{u}-\frac{\dot{u}\dot{v}}{y})-uG_2.\end{equation}

 Now the expressions \eqref{G1},\eqref{G2} and \eqref{G3} are
  introduced in \eqref{geo} and we get
  \begin{subequations}\label{geoY}
\begin{align}
 \bar{\eta}H_1^2 & = -\iota [(v+\ii)H_3-uH_2],\label{geoY1}\\
     H_1^2 & = \iota H_2. \label{geoY2} 
\end{align}
\end{subequations}

If we introduce \eqref{geoY2} into \eqref{geoY1}, we have
\[
  (\bar{\eta}-u)H^2_1=-(v+\\i) H_3,
\]
and, with \eqref{BARE},  we get \eqref{geoX1}.

 {\bf b)} We calculate the geodesic equations on the Siegel-Jacobi disk
$\mc{X}^J_1$ in the variables $(v,u)$ corresponding to the balanced
metric \eqref{kmb}.

In the variables $(u,v)\in(\C,\mc{X}_1)$ the  geodesic equations 
\eqref{GEOO}  for the metric \eqref{kmb} read
\begin{equation}\label{geomic}
 \left\{
 \begin{array}{l}
 \frac{\dd^2 u}{\dd t^2}+\Gamma^u_{uu}\left(\frac{\dd u}{\dd
     t}\right)^2 +
2\Gamma^u_{uv}\frac{\dd u}{\dd t}  \frac{\dd v}{\dd t} +\Gamma
^u_{vv}\left(\frac{\dd v}{\dd t}\right)^2 =0   ;\\
  \frac{\dd^2 v}{\dd t^2}+\Gamma^v_{uu}\left(\frac{\dd u}{\dd
     t}\right)^2 +
2\Gamma^v_{uv}\frac{\dd u}{\dd t}  \frac{\dd v}{\dd t} +\Gamma
^v_{vv}\left(\frac{\dd v}{\dd t}\right)^2=0 .
     \end{array}
 \right.
\end{equation}
The equations \eqref{CRISTU} which determine the $\Gamma$-symbols for
the Siegel-Jacobi disk are 
\begin{equation}\label{ec528}
 \left\{
 \begin{array}{l}
h_{u\bar{u}}\Gamma^u_{uu}+h_{v\bar{u}}\Gamma^v_{uu}=
\frac{\pa h_{u\bar{u}}}{\pa u}; \\
h_{u\bar{v}}\Gamma^u_{uu}+h_{v\bar{v}}\Gamma^v_{uu}=
\frac{\pa h_{u\bar{v}}}{\pa u}. 
\end{array}
 \right.
\end{equation}
\begin{equation}\label{ec529}
 \left\{
 \begin{array}{l}
h_{u\bar{u}}\Gamma^u_{uv}+h_{v\bar{u}}\Gamma^v_{uv}=
\frac{\pa h_{v \bar{u}}}{\pa u}; \\
h_{u\bar{v}}\Gamma^u_{uv}+h_{v\bar{v}}\Gamma^v_{uv}=
\frac{\pa h_{v\bar{v}}}{\pa u}. 
\end{array}
 \right.
\end{equation}
\begin{equation}\label{ec530}
\left\{
 \begin{array}{l}
h_{u\bar{u}}\Gamma^u_{vv}+h_{v\bar{u}}\Gamma^v_{vv}=
\frac{\pa h_{v \bar{u}}}{\pa v}; \\
h_{u\bar{v}}\Gamma^u_{vv}+h_{v\bar{v}}\Gamma^v_{vv}=
\frac{\pa h_{v\bar{v}}}{\pa v}. 
\end{array}
 \right.
\end{equation}

We calculate  easily the partial derivatives of the
elements of the metric \eqref{kma}
\begin{equation}\label{ec531}
\begin{split}
\frac{\pa h_{u\bar{u}}}{\pa u } &= 0,~ \frac{\pa h_{u\bar{v}}}{\pa u}=\frac{\ii\nu}{2y^2}
, ~ \frac{\pa h_{v\bar{v}}}{\pa u }=-\frac{\ii\nu r}{y^2};\\
\frac{\pa h_{v\bar{u}}}{\pa v } & = -\frac{\ii \nu r}{y^2}, \frac{\pa h_{v\bar{v}}}{\pa v}=
\frac{\ii k}{2y^3}+\frac{3}{2}\ii\nu \left(\frac{r}{y}\right)^2.
\end{split}
\end{equation}

  Introducing \eqref{ec531} into \eqref{ec528}-\eqref{ec530}, we find
for   the  Christoffel's symbols $\Gamma$-s  the following  expressions
\begin{equation}\label{GAMM}
\begin{split}
\Gamma^u_{uu}  & =-\frac{\ii}{\iota}r,~\Gamma^v_{uu}=\frac{\ii}{\iota}, ~
\Gamma^u_{uv}= \frac{\ii}{2}(\frac{1}{y}-2\frac{r^2}{y});\\
 \Gamma^v_{vu} & =-\frac{\ii}{\iota}r,~
 \Gamma^u_{vv}=\frac{\ii}{\iota}r^3, ~ \Gamma^v_{vv} =
 \ii(\frac{1}{y}+\frac{r^2}{\iota}).
\end{split}
\end{equation}
Introducing \eqref{GAMM} in \eqref{geomic}, we get the equations 
\eqref{ECIV_1} on $\mc{X}^J_1$ in the variables $(u,v)\in (\C, \mc{X}_1)$.

{\bf c)} is proved because  the equations
\eqref{geoX}  represent  geodesic equations \eqref{ECIV_1} for  
Christofell symbols \eqref{GAMM} associated to metric corresponding
to the \Ka~two-form \eqref{BFR}.
  
 \end{proof}

A particular solution of \eqref{geoX} on $\mc{X}^J_1$ in the variables
$(v,u)$ is given in 
\begin{Remark}\label{RE3}
  The  Cayley transform \eqref{210a} $\mc{D}_1\ni w\rightarrow v
  \in \mc{X}_1$ is a geodesic mapping $\Phi^{-1}: \mc{D}_1\rightarrow
  \mc{X}_1$.
  
 If $w(t)$ is the solution \eqref{WT} of the equation $G_2=0$ of
 geodesics on the
 Siegel disk $\mc{D}_1$, then   geodesic
 equations $H_2=0$ on
 the Siegel upper half-plane $\mc{X}_1$ with
 $H_3$ defined in \eqref{geoX2}
  have the solution
\begin{equation}\label{WT1}
    v(t) =\ii \frac{1+w(t)}{1-w(t)},\quad v(0)=\ii,~~\dot{v}(0)=2v(0)B.
  \end{equation}
 $\eta=\eta_0$ gives  the particular solution of equations of
  geodesics \eqref{geoX} on $\mc{X}^J_1$
  \begin{equation}\label{SPART1}
    v=v(t), \quad u(t)=\frac{\eta_0-\bar{\eta_0}w(t)}{1-w(t)},\quad u(0)=\eta_0.
  \end{equation}
\end{Remark}
\begin{proof}
  For the first assertion it is observed that
  $$H_2=-2\frac{P}{(1-w)^2Q}G_2,$$ where
 \begin{equation}\label{QQQ}
 Q:=|1-w|^2.
\end{equation}
  $G_2=0$ implies $H_2=0$ and $v$ given in \eqref{WT1} is a
  solution of $H_2=0$ if $w$ is a solution of $G_2=0$. 
  \end{proof}

Now we are intersted in the geodesic equations on the
Siegel-Jacobi upper half-plane expressed in the variables
$(v,\eta)=(x+\ii y,q+\ii p)$.
 \begin{Proposition}\label{P4}

a) The two-parameter   balanced  metric  on the
Siegel--Jacobi upper half-plane $\mc{X}^J_1$, the particular case of
\eqref{MTRTOT}  corresponding to  item 
\emph{3)} in \emph{Proposition \ref{BIGTH}},   associated to the \Ka~
two-form \eqref{BFR}, \eqref{BUVpq1}, is 
\begin{equation}\label{METRS2}
  \dd s^2_{\mc{X}^J_1}(x,y,p,q)  \!=\!
\alpha\frac{\dd x^2\!+\!\dd   y^2}{y^2} +\frac{\gamma}{y}(S\dd p^2+\dd q^2+2x\dd
  p\dd q),
\end{equation}
where $S$ was defined in \eqref{MTRTOT} and
\begin{equation}\label{AK}
  \alpha:=k/2,\quad\gamma:=\nu.
  \end{equation}
  
The geodesic equations on $\mc{X}^J_1$ corresponding to the
metric \eqref{METRS2} are
\begin{subequations}\label{420}
  \begin{align}
  E_1 &:= \ddot{x}-\frac{2}{y}\dot{x}\dot{y}-\ep R y\dot{p}=0 ,\quad
        R:=\Re F=x\dot{p}+\dot{q}, \quad \epsilon:=\frac{\gamma}{\alpha},\label{421a}\\
  E_2 &:=
        \ddot{y}+\frac{1}{y}(\dot{x}^2-\dot{y}^2)+\frac{\ep}{2}(R^2-y^2\dot{p}^2)=0,
\label{421b}\\
  E_3 &:= \ddot{p}+R\frac{\dot{x}}{y^2}+\frac{1}{y}\dot{y}\dot{p}=0,\label{421c}\\
  E_ 4&:= \ddot{q}+\frac{\dot{x}}{y^2}(y^2\dot{p}-xR)-\frac{\dot{y}}{y}(R+x\dot{p})=0 .\label{421d}
 \end{align}
\end{subequations}
If in \eqref{420} we take $\epsilon =0$ ($\gamma=0$) we get the
expression of  geodesic  equations  $H_2=0$ in \eqref{geoX2} in the variables  $(x,y)$ on $\mc{X}_1$ corresponding to the Christofell's symbols
\eqref{GM22}
\begin{subequations}\label{GM22}
  \begin{align}
    \Gamma^x_{xx} & = 0,\quad  \Gamma^x_{xy}  = -\frac{1}{y},\quad
                    \Gamma^x_{yy}  = 0,\\
     \Gamma^y_{xx} & = \frac{1}{y},\quad  \Gamma^y_{xy}  = 0,\quad  \Gamma^y_{yy}  = -\frac{1}{y}.
 \end{align}
 \end{subequations}
associated with the metric in case \emph{1)} in \emph{Proposition \ref{BIGTH}}
\[
\ddot{x} -\frac{2}{y}\dot{x}\dot{y}=0,\qquad
\ddot{y}  +\frac{1}{y}(\dot{x}^2-\dot{y}^2)=0,
\]
with the solution \eqref{WT1}.

b) With the change of coordinates  $\mc{D}^J_1\ni (v,u)\rightarrow
(x,y,p,q)\in \mc{X}^J_1$ \eqref{UVPQ},  the geodesic equations \eqref{ECIV_1} on $\mc{D}^J_1$
become on $\mc{X}^J_1$ the system of differential equations
   \begin{subequations}\label{geoYY}
\begin{align}
 pK^2_1 & = \iota K_3, \quad
                       K_1:=F,
                       ~ K_3 :=  \ii(\ddot{p}v+\ddot{q}) - \bar{F}\frac{\dot{v}}{y}+pH_2;
                     \label{geoYY1}\\
 K^2_1 & = \iota K_2, \quad K_2:= H_2, \quad v=x+\ii y,
\label{geoYY2}
\end{align}
\end{subequations}
where $F$ was defined in \eqref{BUVpq1}.

c) The transform $(v,u)\rightarrow (x,y,p,q)$ \eqref{UVPQ} is a
geodesic mapping
$\mc{D}^J_1\rightarrow \mc{X}^J_1$.

$\eta= q+\ii p = ct$ is a particular solution of \eqref{geoYY}.

\end{Proposition}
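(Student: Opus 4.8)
The plan is to prove Proposition \ref{P4} in three parts, mirroring the structure of Proposition \ref{P3}. For part a), I would first recall the balanced metric on $\mc{X}^J_1$: one way is to push forward the metric \eqref{kmb} via the change of coordinates \eqref{UVPQ}, \eqref{LASt1}, or equivalently to take the relevant specialization $\beta=\delta=0$ of the general invariant metric \eqref{MTRTOT} of Proposition \ref{BIGTH}; this immediately yields \eqref{METRS2} with the identification \eqref{AK} between $(\alpha,\gamma)$ and the representation-theoretic parameters $(k,\nu)$. Then I would write down the metric matrix $g_{\mc{X}^J_1}(x,y,p,q)$ in the order $(x,y,p,q)$, compute the Christoffel symbols of the first kind via \eqref{geot} and invert to get those of the second kind via \eqref{geoI}, and substitute into \eqref{GEOO}. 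The block structure of the metric — the $(x,y)$-block is $\frac{\alpha}{y^2}\un$ and the $(p,q)$-block is $\frac{\gamma}{y}\left(\begin{smallmatrix}S&x\\x&1\end{smallmatrix}\right)$ with off-diagonal coupling only through the $y$-dependence of the $(p,q)$-block — makes this tractable; collecting terms should produce exactly \eqref{421a}--\eqref{421d}, with the auxiliary quantities $R=\Re F=x\dot p+\dot q$ and $\epsilon=\gamma/\alpha$ appearing naturally. Setting $\epsilon=0$ decouples the $(x,y)$-equations, giving the stated geodesic equations on $\mc{X}_1$ with Christoffel symbols \eqref{GM22}, and the solution \eqref{WT1} is inherited from Remark \ref{RE3}.

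For part b), I would start from the equations \eqref{ECIV_1} (equivalently \eqref{geoX}) on $\mc{X}^J_1$ in the $(v,u)$ variables proved in Proposition \ref{P3}, and substitute $u=pv+q$ from \eqref{UVPQ}. Differentiating, $\dot u=\dot p\,v+p\dot v+\dot q=F+p\dot v$ with $F=\dot p v+\dot q$ as in \eqref{BUVpq1}, so $H_1=\dot u-r\dot v=\dot u-p\dot v=F$ (using $r=p$ from part c) of Proposition \ref{PRFC}); hence $K_1=H_1=F$ and $H_1^2=\iota H_2$ becomes $K_1^2=\iota K_2$ with $K_2=H_2$, which is \eqref{geoYY2}. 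For $K_3$, differentiate once more: $\ddot u=\ddot p\,v+2\dot p\dot v+p\ddot v=(\ddot p v+\ddot q)+2\dot p\dot v+p\ddot v$, and substitute into $H_3=\ii\ddot u-\frac{\dot u\dot v}{y}$; using $\ddot v=-2\dot v^2/(\bar v-v)+\cdots$ from \eqref{geoX2} (i.e. $\ii\ddot v=H_2+\dot v^2/y$) to eliminate $\ddot v$, and noting $\dot u\dot v=(F+p\dot v)\dot v$ together with $\bar F=\dot p\bar v+\dot q$, one should be able to reorganize $H_3$ into the form $\ii(\ddot p v+\ddot q)-\bar F\dot v/y+pH_2$ plus terms that cancel against the $rH_1^2=\iota H_3$ relation. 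The bookkeeping here — keeping track of real versus complex combinations and correctly reproducing the $-\bar F\dot v/y$ term — is the step I expect to require the most care.

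For part c), the transform \eqref{UVPQ} is a geodesic mapping because part b) exhibits the geodesic equations on $\mc{X}^J_1$ in the $(x,y,p,q)$ coordinates as the image under \eqref{UVPQ} of the geodesic equations on $\mc{X}^J_1$ in the $(v,u)$ coordinates, with no extra reparametrization of $t$ introduced — this is the same reasoning used for parts c) of Proposition \ref{P3} and the first assertion of Remark \ref{RE3}, invoking the notion of geodesic mapping recalled in Section \ref{gm}. Finally, to check that $\eta=q+\ii p=ct$ is a particular solution of \eqref{geoYY}: since $\eta$ constant corresponds under Proposition \ref{STR} to $\eta=\eta_0$, which by Remark \ref{RE3} is the particular solution \eqref{SPART1} on $\mc{X}^J_1$ in $(v,u)$ variables, pushing this through \eqref{UVPQ} gives constant $p,q$ (hence $F=0$, $K_1=K_3=0$) with $v=v(t)$ solving $H_2=0$, so \eqref{geoYY1}--\eqref{geoYY2} are satisfied; alternatively one substitutes $\dot p=\dot q=0$ directly into \eqref{420}, finding $E_3=E_4=0$ automatically and $E_1=E_2=0$ reducing to the $\mc{X}_1$ geodesic equations, confirmed by \eqref{WT1}. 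The main obstacle throughout is purely computational — the reduction of $H_3$ in part b) and the correct assembly of the four scalar equations \eqref{421a}--\eqref{421d} from the Christoffel symbols in part a) — but no conceptual difficulty arises beyond what Proposition \ref{P3} already handles.
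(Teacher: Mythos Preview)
Your proposal is correct and follows essentially the same approach as the paper: part a) computes the Christoffel symbols directly from the block metric matrix via \eqref{geoI}; part b) substitutes $u=pv+q$ into the $H_i$'s of \eqref{geoX}, using $H_1=F$, $K_2=H_2$, and the identity $F-\bar F=2\ii y\dot p$ to reduce $H_3$ to $K_3$; part c) then follows from a) and b). The only step you leave implicit that the paper makes explicit is taking the real and imaginary parts of $K_1^2=\iota K_2$ and of the reduced relation $\ii(\ddot p\,v+\ddot q)=\bar F\dot v/y$ (obtained from $pK_2=K_3$) to recover \eqref{421a}--\eqref{421d}, which is what actually closes the loop between \eqref{geoYY} and \eqref{420} needed for c).
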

\begin{proof}
{\bf{a )}}
The matrix associated  with the   metric \eqref{METRS2} is
\begin{equation}\label{begGGm}
g_{{\mc{X}}^J_1}\! = \!\left(\begin{array}{cccc}g_{xx} &0 &0 &0\\
0& g_{yy}& 0& 0 \\
0& 0& g_{pp} & g_{pq} \\0 & 0& g_{qp}& g_{qq}
 \end{array}\right),\!
 \begin{array}{cc}\qquad g_{xx}\!=\frac{\alpha}{y^2}, &
 \!g_{yy}\!=\!\frac{\alpha}{y^2};\\
g_{pq}\!=\!\gamma\frac{x}{y} , &
g_{pp} \!=\!\gamma\frac{S}{y},\quad g_{qq}\!=\!\frac{\gamma}{y},
\end{array}
\end{equation}
and 
\begin{equation}\label{DD2}
\det(g_{{\mc{X}}^J_1}(x,y,q,p))=\left(\frac{\alpha\gamma}{y^2}\right)^2.
\end{equation}
The invers of the metric matrix \eqref{begGGm} is
\[
g^{-1}_{{\mc{X}}^J_1}\! = \!\left(\begin{array}{cccc}g^{xx} &0 &0 &0\\
0& g^{yy}& 0& 0 \\
0& 0& g^{pp} & g^{pq} \\0 & 0& g^{qp}& g^{qq}
 \end{array}\right),\!
 \begin{array}{llll}g^{xx}\!= \!g^{yy}\!=\!\frac{y^2}{\alpha}, &
   g^{pq}\!=\!-\frac{x}{\gamma y},&
 g^{pp} \!=\!\frac{1}{\gamma y},& g^{qq}\!=\!\frac{S}{\gamma y}.
 \end{array}
\]

With
 formula \eqref{geoI} we determine the non-zero  Christoffel's symbols corresponding to
 the Riemannian metric \eqref{METRS2} of the Siegel-Jacobi upper half-plane
\begin{equation}
  \begin{array}{lllll}\label{GSC}
    \Gamma^{x}_{xy}=-\frac{1}{y}
  &\Gamma^{x}_{pp}=-\ep xy
  &\Gamma^{x}_{pq}= -\frac{1}{2}\ep y & & \\
   \Gamma^y_{xx}= \frac{1}{y}&
 \Gamma^y_{yy}=-\frac{1}{y}&\Gamma^y_{pp}=\frac{\ep}{2}(x^2\!\!-\!\!y^2)&\Gamma^y_{pq}=\frac{\ep}{2}x
  &\Gamma^y_{qq}=\frac{\ep}{2} \\
  \Gamma^{p}_{xp}=\frac{1}{2}\frac{x}{y^2} &  \Gamma^{p}_{xq}=
 \frac{1}{2}\frac{1}{y^2}&\Gamma^{p}_{yp}= \frac{1}{2y} & & \\
  \Gamma^q_{xp}=\frac{y^2-x^2}{2y^2} &\Gamma^q_{xq}= -\frac{x}{2y^2}&\Gamma^q_{yp}=-\frac{x}{y}&\Gamma^q_{yq}= -\frac{1}{2y} &  \end{array} .
\end{equation}
To get \eqref{GM22}, we apply Remark \ref{RM10} for $\mc{X}_1$ and we get  from  the
holomorphic $\Gamma$ symbols  the associated  real Christoffel's symbols, 
see also e.g. \cite[Exercise 8 p 58]{MPC}.

{\bf{b)}}    With \eqref{geoX}, in $\nu H^2_1=kH_2$ we introduce
$H_1=\dot{u}-p\dot{v}$, $H_2=\ii \ddot{u}-\frac{\dot{u}\dot{v}}{y}$ and taking the
real and imaginary part,  we get \eqref{421a} and \eqref{421b}.

Taking the derivative of \eqref{UVPQ}, we get successively 
\[
\dot{u}=\dot{p}v+p\dot{v}+\dot{q},\quad \ddot{u}=\ddot{p}v+2\dot{p}\dot{v}+p\ddot{v}+\ddot{q}
\]
which introduced in  expression \eqref{geoX2} gives the expression
of $K_3$ in \eqref{geoYY1}.
Now in the first equation \eqref{geoYY1} we introduce  the expression of
$K_3$ and  taking  into consideration  first equation in
\eqref{geoYY2}, we get
\begin{equation}\label{dodo}
2\ii (\ddot{p}v+\ddot{q})-\bar{F}\frac{\dot{v}}{y}=0.
\end{equation}
Taking the real and imaginary part of \eqref{dodo}, we get
\eqref{421c}, \eqref{421d}.

{\bf{c )}} Assertion c) is a consequence  of a) and b).
\end{proof}

We make now
\begin{Remark}\label{ECHI}
  The expression \eqref{DD2} of the determinant of the metric matrix of
  $\mc{X}^J_1$ in the variables $(x,y,q,p)$ can be obtained
  from the expression \eqref{DETH2} of the metric matrix of
  $\mc{D}^J_1$ in the variables $(m,n,\alpha,\beta)$.
\end{Remark}
\begin{proof}
 Let $(M,g(x))\rightarrow (M',g'(x'))$ be an isometry of Riemannian
  manifolds. Then we have the relation, see e.g. \cite[(9.3) p 23]{EISEN}
 \begin{equation}\label{SCHD}
\det g'(x')=\det g(x) J^2, \text{~where~} J:= \det |\frac{\pa
      x^i}{\pa x'^j}|.
  \end{equation}
In our case of $(\mc{D}^J_1,g(m,n,\alpha,\beta))\rightarrow
(\mc{X}^J_1,g(x,y,q,p))$, with the first relation \eqref{210b} and
\eqref{PMN},  we have the change of coordinates
\begin{equation}\label{xYqP}
  (x,y,q,p)=(-\frac{2\beta}{Q},\frac{P}{Q},\frac{C}{P},\frac{D}{P}), 
\end{equation}
where $P$ ($Q$) was defined in \eqref{kk1} (respectively
\eqref{QQQ}).

With formula \eqref{SCHD} applied  to the change of coordinates \eqref{xYqP} from
the Siegel-Jacobi disk to the Siegel-Jacobi upper half-plane we have
to 
calculate the Jacobian 
\begin{equation}\label{JJ}
  I:=\frac{\pa (x,y,q,p)}{\pa (m,n,\alpha,\beta)} =I_1I_2 , \quad I_1:=\frac{\pa (x,y)}{\pa
  (\alpha,\beta)},\quad I_2:= \frac{\pa (q,p)}{\pa
  (m,n)}.
\end{equation}
 Using the Cauchy-Riemann equations
\begin{equation}\label{CR}
  \frac{\pa x}{\pa \alpha}=\frac{\pa y}{\pa \beta},\quad \frac{\pa
    x}{\pa \beta }=-\frac{\pa y}{\pa \alpha},
\end{equation}
we have to calculate 
\begin{equation}\label{II}
 I_1=\frac{\pa x}{\pa \alpha}\frac{\pa
    y}{\pa \beta}-\frac{\pa x}{\pa \beta}\frac{\pa y}{\pa \alpha}=
  (\frac{\pa x}{\pa\alpha})^2+(\frac{\pa x}{\pa \beta})^2,
\end{equation}
and with \eqref{xYqP}
$$\frac{\pa x}{\pa \alpha}=4\frac{(\alpha-1)\beta}{Q^2},\quad
\frac{\pa x}{\pa \beta}=-2\frac{(\alpha-1)^2-\beta^2}{Q^2},$$ 
introduced in \eqref{II}, we get
  $$I_1=\frac{4}{Q^2}, \quad  I_2=\frac{1}{P},\quad I=\frac{4}{Q^2}\frac{1}{P}.$$
   With \eqref{SCHD} and \eqref{JJ}, we get for $J=I^{-1}$
 $$\det g(x,y,q,p) = (\frac{k \nu}{2}\frac{Q^2}{P^2})^2,$$
  i.e.  relation \eqref{DD2} because of the notation \eqref{AK}, \eqref{xYqP}.
\end{proof}

We show below that  the geodesic equations \eqref{420} on
$\mc{X}^J_1$ are found making in \eqref{geo}  the change of variables
given in \eqref{ULTRAN} with the consequence that
\begin{Proposition}\label{PP5}
The second partial Cayley transform  \eqref{ULTRAN}
  $\Phi_1 :     (\mc{D}^J_1,\dd s^2_{\mc{D}^J_1}(w,z))
\rightarrow  (\mc{X}^J_1, \dd s^2_{\mc{X}^J_1} (x,y,p,q))$ is a
  geodesic mapping.

\end{Proposition}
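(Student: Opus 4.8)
The plan is to push the change of variables \eqref{ULTRAN1} through the geodesic equations \eqref{geo} on $\mc{D}^J_1$ and verify that the output is exactly the system \eqref{420} on $\mc{X}^J_1$. Since by part a) of Proposition \ref{P4} the system \eqref{420} is the system of geodesic equations \eqref{GEOO} of the metric $\dd s^2_{\mc{X}^J_1}(x,y,p,q)$ — with Christoffel symbols \eqref{GSC} — associated with the \Ka~two-form \eqref{BFR}, \eqref{BUVpq1}, and since $\Phi_1$ is a diffeomorphism $\mc{D}^J_1\to\mc{X}^J_1$ by \eqref{ULTRAN}, this shows that $\Phi_1$ carries (parametrized) geodesics of $(\mc{D}^J_1,\dd s^2_{\mc{D}^J_1}(w,z))$ onto geodesics of $(\mc{X}^J_1,\dd s^2_{\mc{X}^J_1}(x,y,p,q))$, hence is a geodesic mapping in the sense of \cite[Definition 5.1 p 127]{mikes}.

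The computation naturally splits into the two stages already performed in the excerpt. Using \eqref{strange} and \eqref{UVPQ}, one first notes that in the $(x,y,p,q)$-realization of $\mc{X}^J_1$ the map $\Phi_1=FC_1\circ\Phi$ factors as the partial Cayley transform $\Phi:(w,z)\to(v,u)$ of \eqref{210} followed by the substitution $u=pv+q$, $v=x+\ii y$ of \eqref{UVPQ}; indeed \eqref{ULTRAN1} is just $w=\frac{v-\ii}{v+\ii}$, $z=\frac{2\ii u}{v+\ii}$ with $u=pv+q$. The first stage is part a) of Proposition \ref{P3}: the substitution identities \eqref{32} together with \eqref{BARE} reduce $G_1,G_2,G_3$ of \eqref{geo1}--\eqref{geo2} to the combinations \eqref{G1}, \eqref{G2}, \eqref{G3}, hence \eqref{geo} to \eqref{geoX}. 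The second stage is part b) of Proposition \ref{P4}: from $\dot u=\dot p v+p\dot v+\dot q$, $\ddot u=\ddot p v+2\dot p\dot v+p\ddot v+\ddot q$ and $r=p$, $F=\dot p v+\dot q$ (cf. \eqref{BUVpq1}) the system \eqref{geoX} turns into \eqref{geoYY}. It then remains only to separate \eqref{geoYY} into real and imaginary parts: from $K_1^2=\iota K_2$ with $K_1=F$, $K_2=H_2=\ii\ddot v-\dot v^2/y$ one reads off \eqref{421a} and \eqref{421b}, and from the first equation in \eqref{geoYY1} combined with \eqref{geoYY2} one obtains \eqref{dodo}, whose real and imaginary parts are \eqref{421c} and \eqref{421d}.

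A shorter, non-computational route is also available and worth recording: $\Phi$ is a geodesic mapping $\mc{D}^J_1\to\mc{X}^J_1$ by part c) of Proposition \ref{P3}, the transform $(v,u)\to(x,y,p,q)$ of \eqref{UVPQ} is a geodesic mapping by part c) of Proposition \ref{P4}, and by the composition property of geodesic mappings recalled in Section \ref{gm} the composite $\Phi_1$ is again a geodesic mapping. I would nonetheless present the explicit substitution, since it is exactly what the sentence preceding the Proposition announces and since it simultaneously re-derives \eqref{420} directly from \eqref{geo}.

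The only genuine difficulty is bookkeeping. One must differentiate $z=2\ii(pv+q)/(v+\ii)$ twice while keeping all of $v,\dot v,\ddot v,p,\dot p,\ddot p,q,\dot q,\ddot q$, insert the result into $G_3$, and check the cancellations — of the $\bar\eta$-terms via \eqref{BARE}, and of the $\dot u\dot v/y$-terms displayed in the proof of part a) of Proposition \ref{P3} — after which the split into real and imaginary parts is routine. There is no conceptual obstacle: once \eqref{geoYY} has been obtained and matched against the Christoffel symbols \eqref{GSC} of the metric \eqref{METRS2}, the geodesic-mapping property of $\Phi_1$ is immediate.
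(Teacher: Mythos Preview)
Your proposal is correct, but the route differs from the paper's. You factor $\Phi_1$ through the intermediate $(v,u)$ coordinates and invoke the computations already carried out in Propositions \ref{P3} and \ref{P4}: first \eqref{geo}$\to$\eqref{geoX} via \eqref{32}--\eqref{G3}, then \eqref{geoX}$\to$\eqref{geoYY}$\to$\eqref{420} via $u=pv+q$. You also note the even quicker composition argument from parts c) of those propositions. The paper instead performs a single direct brute-force substitution of \eqref{ULTRAN1} into \eqref{geo}, working in the $(v,\eta)$ variables throughout: it differentiates \eqref{DOTZ} once more to obtain $\ddot z$, forms new expressions $G'_i(v,\eta)$ for the transformed $G_i$ (in particular a fresh formula for $G'_3$ in terms of $Z:=v\ddot p+\ddot q$), and then takes real and imaginary parts of the resulting equations to land on \eqref{421a}--\eqref{421d}. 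Your approach is more economical and makes clear that nothing new is needed beyond \ref{P3} and \ref{P4}; the paper's approach is an independent, self-contained verification---which is precisely the ``brute force'' check announced before the proposition---at the cost of repeating work.
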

\begin{proof}
From \eqref{DOTZ}, we get
\begin{equation}\label{DDOTZ} 
\ddot{z}=\frac{2\ii}{v+\ii}
\left\{Z-2\frac{\dot{v}\bar{\dot{\eta}}}{v+\ii}
-\frac{\bar{\eta}[\ddot{v}(v+\ii)-2\dot{v}^2]}{(v+\ii)^2}\right\},
\quad Z:=v\ddot{p}+\ddot{q}.
\end{equation}
With \eqref{32a}, \eqref{32b} and \eqref{DDOTZ}, we get for $G_3$ in
\eqref{geo1} the value
$G'_3:=G'_3(v,\eta):=FC_1\circ\Phi (G_3(w,z))$ the expression
\begin{equation}\label{G3P}
G'_3=\frac {2\ii}{v+\ii}\left(Z
-\frac{\bar{\eta}}{v+\ii}V_1
+2\frac{\dot{v}\bar{F}}{\bar{v}-v}\right),
\end{equation}
where  $F$ was defined in \eqref{BUVpq1} and $V_1$ in \eqref{geoX2}.

We also find for $G'_i:=G'_i(v,\eta)=\Phi_1\circ\Phi (G_i(w,z))$, $i=1,2$ the expressions
\begin{equation}\label{G1G2}
G'_1= \frac{2\ii}{v+\ii}F,\quad
G'_2=\frac{2\ii}{(v+\ii)^2}V_1.
\end{equation}
Introducing \eqref{G1G2} into \eqref{geo2}, we obtain
\begin{equation}\label{nG1G2}
V_1=-\ii
\iota^{-1}F^2.\end{equation}
The real (imaginary) part of \eqref{nG1G2} expresses  equation \eqref{421a}
(respectively \eqref{421b}) because
$\iota^{-1}=\frac{\epsilon}{2}$. 

Introducing \eqref{nG1G2} into \eqref{G3P}, we get
\begin{equation}\label{GP32}
G'_3=\frac{2\ii}{v+\ii}\left(Z+s
2\dot{v}\frac{\bar{F}}{\bar{v}-v}+
\ii\iota^{-1}\frac{\bar{\eta}F^2}{v+\ii}\right).
\end{equation}
Now we introduce \eqref{GP32} into \eqref{geo1} and we get
\[
Z+2\dot{v}\frac{\bar{F}}{\bar{v}-v}=0
\]
whose real (imaginary part) gives \eqref{421c} (respectively \eqref{421d}).
\end{proof}

  Proposition \ref{PP5} is  a more precise 
formulation of  \cite[Remark 8]{SB14}  for the $FC$-transform
\eqref{E32} on $\mc{D}^J_1$ and
  \cite[Proposition 6]{GAB} for $\mc{D}^J_n$, $n\in\N$.

\section{Geodesics on the extended Siegel--Jacobi upper half-plane}\label{GEX}

In order to get geodesic equations on the extended Siegel-Jacobi
upper half-plane in the S-variables $(x,y,p,q,\kappa)$, we use the expression  of the metric
  on   $\tilde{\mc{X}}^J_!$ given in Proposition  \ref{PROP5}.  The solution  \eqref{HEISI} 
of geodesic equations \eqref{EURJ}  on  the Heisenberg group 
$H_1$ in  Theorem \ref{PROP5} is taken form  \cite[(11) in Theorem 1]{mar}.

\begin{Theorem}\label{PROP5}

The  three-parameter   metric on  the extended  Siegel-Jacobi upper
  half-plane  
  $\tilde{\mc{X}}^J_1$ expressed  in the  S-coordinates
  $(x,y,p,q,\kappa)$,  left-invariant with  respect  to  the action  of the Jacobi group
 $G^J_1(\R)$,  is given  by  item  \emph{4)} in
  \emph{Proposition \ref{BIGTH}} as 
  \begin{equation}\label{linvG}
    \begin{split}
 {\rm d} s^2_{\tilde{\mc{X}}^J_1}(x,y,p,q,\kappa) &
 ={\rm d} s^2_{\mc{X}^J_1}(x,y,p,q)+\lambda^2_6(p,q,\kappa)\\
 &=\frac{\alpha}{y^2}\big({\rm d} x^2+{\rm d}
 y^2\big)+\left(\frac{\gamma}{y}S+\delta q^2\right){\rm d} p^2+
 \left(\frac{\gamma}{y}+\delta p^2\right){\rm d} q^2 +\delta {\rm d} \kappa^2\\
& + 2\left(\gamma\frac{x}{y}-\delta pq\right){\rm d} p{\rm d} q +2\delta (q{\rm d} p{\rm d}
 \kappa-p{\rm d} q {\rm d} \kappa).
    \end{split}
  \end{equation}

 a) The  geodesic equations on $\tilde{\mc{X}}^J_1$ associated to
 the metric \eqref{linvG} are 
 \begin{subequations}\label{EURI}
  \begin{align}
    E'_1: &= E_1 = 0, \label{EUR1}\\
    E'_2: &= E_2= 0,\label{EUR2}\\
 \label{EU3}   E'_3: &=
                       E_3+\frac{2\tau}{y}[xq\dot{p}^2+(q-px)\dot{p}\dot{q}-p\dot{q}^2+R\dot{\kappa}]
                       = 0,\quad \tau:=\frac{\delta}{\gamma},\quad \xi:=px+q,\\
\label{EU4}    E'_4: &= E_4+\frac{2\tau}{y}[-qS\dot{p}^2+(pS-xq)\dot{p}\dot{q}-S\dot{p}\dot{\kappa}+ x\dot{q}(p\dot{q}-\dot{\kappa})]= 0,\\
  \nonumber E'_5: &= \ddot{\kappa} +\frac{py^2-\xi x}{y^2}\dot{x}\dot{p}-\frac{\xi}{y^2}\dot{x}\dot{y}
            -\frac{2px+q}{y}\dot{y}\dot{p}-\frac{p}{y}\dot{y}\dot{q}\\
            & ~~~+\frac{2\te}{y}[-\dot{p}(pS+qx)(q\dot{p}+\dot{\kappa})+(p^2S-q^2)\dot{p}\dot{q}+
 \label{EU5}            \xi\dot{q}(p\dot{q}-\dot{\kappa})]= 0.
  \end{align}
  \end{subequations}
If $\tau=0 $, then  the first four equations  \eqref{EURI} are the equations
\eqref{420} of geodesics on the Siegel-Jacobi upper half-plane  with
invariant metric \eqref{METRS2}. 

b) In particular, the  geodesic equations on the Heisenberg group $H_1$
corresponding to the metric in case \emph{6)} in \emph{Proposition
  \ref{BIGTH}} are obtained as the
 particular cases \eqref{EURJ}  of  \eqref{EU3}-\eqref{EU5},
where 
\begin{subequations}\label{EURJ}
  \begin{align}
    E''_3: &= \ddot{p}+2\tau(-p\dot{q}^2+q\dot{p}\dot{q} +\dot{q}\dot{\kappa}),\\
   E''_4: &= \ddot{q}+2\tau(-q\dot{p}^2+p\dot{p}\dot{q}-\dot{p}\dot{\kappa}), \\
    E''_5: &=
                 \ddot{\kappa}+2\tau[pq(-\dot{p}^2+\dot{q}^2)+(p^2-q^2)\dot{p}\dot{q}
-(p\dot{p}+q\dot{q})\dot{\kappa}].
  \end{align}
\end{subequations}

Geodesic lines issuing from $(0,0,0)$ such that
$(\dot{x}(0),\dot{y}(0),\dot{z}(0))= $  $(r\cos\phi,r\sin\phi,\sigma)$, $\sigma\not=0$  in $H_1$ are given
by 
\begin{equation}\label{HEISI}
 \left\{
 \begin{array}{l}
x(t)=\frac{r}{2\sigma}(\sin(2\sigma t+\phi)-\sin\phi),\\
y(t)=\frac{r}{2\sigma}(\cos \phi-\cos(2\sigma t+\phi)),\\
z(t)=\!\frac{1+\sigma^2}{2\sigma}t-\frac{1-\sigma^2}{4\sigma^2}\sin
2\sigma t,
\end{array}
 \right.
\end{equation}
while if $\sigma =0$, 
$$(x(t),y(t),z(t))=(\alpha_1 t,\beta_1 t,0),\quad \alpha_1^2+\beta_1^2=1.$$

\end{Theorem}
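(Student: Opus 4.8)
The plan is to establish Theorem~\ref{PROP5} in three stages: first the metric formula, then the geodesic equations on $\tilde{\mc{X}}^J_1$, and finally the Heisenberg specialization together with the explicit solution.

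\textbf{Step 1: the metric.} The expression \eqref{linvG} is nothing but item~4) of Proposition~\ref{BIGTH} written out in the S-coordinates $(x,y,p,q,\kappa)$. Starting from \eqref{MTRTOT} with $\beta=0$, I would keep $\dd s^2_{\mc{X}^J_1}(x,y,p,q)$ as in \eqref{METRS2} and expand $\delta({\rm d}\kappa-p\,{\rm d}q+q\,{\rm d}p)^2$, collecting the coefficients of ${\rm d}p^2$, ${\rm d}q^2$, ${\rm d}\kappa^2$, ${\rm d}p\,{\rm d}q$, ${\rm d}p\,{\rm d}\kappa$, ${\rm d}q\,{\rm d}\kappa$. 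This is a bookkeeping step and matches $\lambda_6^2(p,q,\kappa)$.

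\textbf{Step 2: the geodesic equations \eqref{EURI}.} I would read off the $5\times5$ metric matrix $g_{\tilde{\mc{X}}^J_1}(x,y,p,q,\kappa)$ from \eqref{linvG}, noting that the $(x,y)$-block decouples exactly as in $\mc{X}^J_1$, so $E'_1=E_1$ and $E'_2=E_2$ follow immediately. The $(p,q,\kappa)$-block is the only place where $\delta$ (hence $\tau=\delta/\gamma$) enters; I would invert that $3\times3$ block --- its determinant is manageable since the $\kappa$-row is simple --- and compute the Christoffel symbols via \eqref{geoI}, then substitute into \eqref{GEOO}. The new terms proportional to $\tau$ in $E'_3, E'_4, E'_5$ come entirely from the $\kappa$-dependent part of the connection; setting $\tau=0$ must reproduce \eqref{GSC} and hence \eqref{420}, which is a useful consistency check. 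An alternative, possibly cleaner, route is to use the left-invariant one-forms $\lambda^1,\dots,\lambda^6$ of \cite{SB19} and the Cartan structure equations, but the direct Christoffel computation is the most transparent.

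\textbf{Step 3: the Heisenberg specialization and explicit geodesics.} By item~6) of Proposition~\ref{BIGTH}, imposing \eqref{MPQ} --- i.e.\ $x=0$, $y=1$, $\theta=0$ --- and $a_1=a_2=\gamma$, $a_3=\delta$ reduces the metric to $g^L_{H_1}$ of \eqref{MTRSINV}. Plugging $x\equiv0$, $y\equiv1$ into \eqref{EU3}--\eqref{EU5} kills $R=x\dot{p}+\dot{q}\mapsto\dot{q}$-type terms appropriately and collapses $E_3, E_4$ to their leading second-derivative pieces, leaving exactly \eqref{EURJ} with $S=x^2+y^2\mapsto 1$, $\xi=px+q\mapsto q$; I would verify that the surviving $\tau$-terms agree. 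Finally, for the explicit solution \eqref{HEISI}, rather than re-deriving it I would invoke \cite[(11) in Theorem~1]{mar}, after matching conventions: identify $(\lambda,\mu,\kappa)$ with the coordinates $(x,y,z)$ used there (note the notational clash with the S-coordinate $x$), check that the metric in \cite{mar} corresponds to $a_1=a_2=a_3$ up to the relevant normalization, and confirm that the stated initial data $(\dot{x},\dot{y},\dot{z})(0)=(r\cos\phi,r\sin\phi,\sigma)$ produce \eqref{HEISI}; the $\sigma=0$ case is the straight line, immediate from \eqref{EURJ}.

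\textbf{Main obstacle.} The hard part will be Step~2: inverting the $3\times3$ $(p,q,\kappa)$-block of the metric and organizing the resulting Christoffel symbols so that the $\tau$-dependent corrections appear in the compact form displayed in \eqref{EU3}--\eqref{EU5}, and so that the $\tau=0$ limit visibly recovers \eqref{420}. The off-diagonal couplings between $p$, $q$ and $\kappa$ make several Christoffel symbols nonzero, and keeping the algebra under control --- while correctly tracking the $x,y$-dependence that sits inside $S$ and $\xi$ --- is where care is needed; everything else is either a direct quotation (the metric, the solution \eqref{HEISI}) or a routine specialization.
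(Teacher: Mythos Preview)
Your plan is correct and matches the paper's approach closely: for part~a) the paper writes out the $5\times5$ metric matrix, inverts it, lists the Christoffel symbols beyond those already in \eqref{GSC} via \eqref{geoI}, and reads off \eqref{EURI}; your observation that $E'_1=E_1$, $E'_2=E_2$ is right because the $\delta$-terms in the $(p,q,\kappa)$-block are independent of $x,y$ and hence contribute nothing to $\Gamma^x_{\cdot\cdot}$, $\Gamma^y_{\cdot\cdot}$.

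The one place where the paper does more than you propose is part~b). Rather than simply setting $(x,y)=(0,1)$ in \eqref{EU3}--\eqref{EU5}, the paper re-derives the Heisenberg geodesic equations from scratch starting from the three-parameter metric \eqref{MTRSINV} with general $a_1,a_2,a_3$, computes all Christoffel symbols and then the covariant derivatives $\nabla_{L^i}L^j$ in the left-invariant frame \eqref{LEFT11}, and checks that for $a_1=a_2=a_3=1$ these coincide with Lemma~1 of \cite{mar}; only then does it invoke \cite[Theorem~1]{mar} for \eqref{HEISI}, finally specializing $a_1=a_2=\gamma$, $a_3=\delta$ to recover \eqref{EURJ}. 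Your specialization route is quicker, but the paper's independent computation in the orthonormal frame is what makes the citation of \cite{mar} legitimate in the present conventions---so your ``match conventions'' step should be carried out at the level of the frame covariant derivatives, not just the coordinate equations.
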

\begin{proof}
  
 a) The metric matrix associated  to the  metric \eqref{linvG} is
\begin{equation}\label{begGG}
g_{\tilde{\mc{X}}^J_1}\! = \!\left(\begin{array}{ccccc}g_{xx} &0 &0 &0&0\\
0& g_{yy}& 0& 0 & 0\\
0& 0& g_{pp} & g_{pq} & g_{p\kappa}\\0 & 0& g_{qp}& g_{qq}
 &g_{q\kappa}\\
0& 0& g_{\kappa p}& g_{\kappa q} & g_{\kappa\kappa}
 \end{array}\right),\!
 \begin{array}{cc}g_{xx}\!=\frac{\alpha}{y^2}, &
  \!g_{yy}\!=\!\frac{\alpha}{y^2},\\
 g_{pq}\!=\!\gamma\frac{x}{y}-\delta p q , &
~g_{p\kappa}\!=\!\delta q, g_{q\kappa}\!=\!-\delta p, \\
   g_{pp} \!=\!\gamma\frac{S}{y}+\delta q^2,&
 g_{qq}\!=\!\frac{\gamma}{y}+\delta p^2,   g_{\kappa\kappa}\!=\!\delta.
\end{array}
\end{equation}
and
\[
\det(g_{{\tilde{\mc{X}}}^J_1})=\delta\left(\frac{\alpha\gamma}{y^2}\right)^2.
\]

 The invers of the metric matrix \eqref{begGG} is
 \[
   g^{-1}_{\tilde{\mc{X}}^J_1}\!\!= \!\!\left(\begin{array}{ccccc}g^{xx} &0 &0 &0&0\\
0& g^{yy}& 0& 0 & 0\\
0& 0& g^{pp} & g^{pq} & g^{p\kappa}\\0 & 0& g^{qp}& g^{qq}
 &g^{q\kappa}\\
0& 0& g^{\kappa p}& g^{\kappa q} & g^{\kappa\kappa}
 \end{array}\right),\!
\begin{array}{lll}g^{xx}\!\!=\!\!g^{yy}\!=\!\frac{y^2}{\alpha} \!\!
  &\quad 
                                                                      g^{pq}\!\!=\!\!-\!\frac{x}{\gamma
                                                                      y} \quad
   & g^{q\kappa}\!=\!\frac{pS+qx}{\gamma y} \\
  g^{p\kappa}\!=\!-\frac{\xi}{\gamma y} &
g^{pp} \!=\!\frac{1}{\gamma y}~~ 
   g^{qq}\!=\!\frac{S}{\gamma y} 
 &g^{\kappa\kappa}\!=\!\frac{1}{\delta}\!+\!\frac{\xi^2\!+\!(py)^2}{\gamma
   y}. 
 \end{array}
 \]
With
 formula \eqref{geoI} we determine the Christofell's symbols corresponding to
 the Riemannian metric \eqref{linvG} of the extended Siegel-Jacobi
 upper half-plane. In formulas below we have  included only  the
 $\Gamma$-s which   are not  given in \eqref{GSC}
\begin{equation}\label{MNM}
\begin{array}{llllll}
 \!\Gamma^{p}_{pp} \!=\! 2\tau\frac{xq}{y}\!&\!
 \Gamma^{p}_{pq} \!=\!\tau\frac{q\!-\!px}{y}
  \!&\!\Gamma^{p}_{p\kappa} \!=\!\tau\frac{x}{y} \!&\!\Gamma^{p}_{qq}
                                               \!\!=\!\!-\!2\tau\frac{p}{y} \!&\!
  \Gamma^{p}_{q\kappa}\!=\!\tau\frac{1}{y}\! &\\
 \!\Gamma^{q}_{pp} \!=\!-2\tau\frac{qS}{y}
  \!&\!\Gamma^{q}_{pq}\!\!=\!\!\tau\frac{-xq\!\!+\!\!pS}{y}
  \!&\!\Gamma^{q}_{p\kappa}\!=\!-\!\tau\frac{S}{y}
 \!&\!\Gamma^{q}_{qq}\!=\! 2\tau
 \!\frac{xp}{y} \!&\!\Gamma^{q}_{q\kappa}\!=\!-\!\tau\frac{x}{y}\!& \\
\!\Gamma^{\kappa}_{xp}\!=\!\frac{py^2\!-\!x\xi}{2y^2}& \!\Gamma^{\kappa}_{xq}\!=\!-\!\frac{\xi}{2y^2}\!&  \Gamma^{\kappa}_{yp}\!\!=\!\!-\!\frac{2px+q}{2y}
  \!&\!\Gamma^\kappa_{yq}\!=\!-\frac{p}{2y} &
\Gamma^{\kappa}_{pp} \!=\!
                                              -2\tau\frac{q}{y} (pS+qx)\!& \\
  \!\Gamma^{\kappa}_{pq} \!=\! \tau\frac{p^2S-q^2}{y}& 
\!\Gamma^{\kappa}_{p\kappa} \!\!=\!-\!\tau\frac{pS+qx}{y}\!
&
 \!\Gamma^{\kappa}_{qq} \!\!=\!\!2\tau\frac{p\xi}{
                                                              y}\!\!&\!\Gamma^{\kappa}_{q\kappa}
                                                                      \!\!=-\!\tau\frac{\xi
                                                           y}{y}.&
                                             & 
 \end{array}
\end{equation}
\eqref{MNM} implies the geodesic equations on $\tilde{\mc{X}}^J_1$ given in
\eqref{EURJ}. 

b)  According to \eqref{MPQ}, the Heisenberg group $\rm{H}_1$
embedded into \Spp~ corresponds to $(x,y,\tau)=(0,1,0)$  in \eqref{EURI}. 

Indeed, the matrix associated to the fundamental two-form
\eqref{MTRSINV} is 
\begin{equation}\label{NUMA}
g_{H_1}\! = \!\left(\begin{array}{ccc}g_{\lambda\lambda} &g_{\lambda\mu}&g_{\lambda\kappa}\\
g_{\mu\lambda}& g_{\mu\mu}& g_{\mu\kappa}\\
g_{\kappa \lambda}&  g_{\kappa \mu} & g_{\kappa\kappa} 
 \end{array}\right),\!
 \begin{array}{ccc}g_{\lambda\lambda}\!=a_1+a_3\mu^2, &
 \!g_{\mu\mu}\!=a_2+a_3\lambda^2& g_{\kappa\kappa}=a_3,\\
 g_{\lambda\mu}=-a_3\lambda\mu,&
 g_{\lambda\kappa}=a_3\mu,& g_{\mu\kappa}=-a_3\lambda,
\end{array}
\end{equation}
and 
\[
  \det(g_{H_1})=a_1a_2a_3.
\]
The inverse  of the metric matrix $g_{H_1}$  \eqref{NUMA} is
\begin{equation}\label{GGegGG}
g^{-1}_{H_1}\! = \!\left(\begin{array}{ccc}g^{\lambda\lambda} &g^{\lambda\mu}&g_{\lambda\kappa}\\
g^{\mu\lambda}& g^{\mu\mu}& g^{\mu\kappa}\\
g^{\kappa \lambda}&  g^{\kappa \mu} & g^{\kappa\kappa} 
 \end{array}\right),\!
 \begin{array}{ccc}g^{\lambda\lambda}\!=\frac{1}{a_1}, &
 \!g^{\mu\mu}\!=\frac{1}{a_2}& g^{\kappa\kappa}=\frac{1}{a_3}+(\frac{\lambda^2}{a_2}-\frac{\mu^2}{a_1}),\\
 g^{\lambda\mu}=0,&
   g^{\lambda\kappa}=-\frac{\mu}{a_1}& g^{\mu\kappa}=\frac{\lambda}{a_2}.
\end{array}
\end{equation}
The Christoffel's symbols associated to the metric matrix \eqref{NUMA}
are 
\[
\begin{array}{llllll}
\Gamma^{\lambda}_{\lambda\lambda} = 0& 
\Gamma^{\lambda}_{\lambda\mu}=\frac{a_3}{a_1}\mu&
\Gamma^{\lambda}_{\lambda\kappa} =\!0\!& 
\Gamma^{\lambda}_{\mu\mu} \!=\!-2\frac{a_3}{a_1}\lambda&
 \Gamma^{\lambda}_{\mu\kappa}\!=\!\frac{a_3}{a_1}&   
\Gamma^{\lambda}_{\kappa\kappa} \!=\!0 \\ 
\Gamma^{\mu}_{\lambda\lambda} \!=\!-2\frac{a_3}{a_2}\mu& 
\Gamma^{\mu}_{\lambda\mu} \!=\frac{a_3}{a_2}\lambda& 
\Gamma^{\mu}_{\lambda\kappa} \!=\!-\frac{a_3}{a_2}&
\Gamma^{\mu}_{\mu\mu} =\!0\!& 
\Gamma^{\mu}_{\mu\kappa} \!=\!0& 
\Gamma^{\mu}_{\kappa\kappa} \!=\!0\!\\
\Gamma^{\kappa}_{\lambda\lambda} \!=\!-\!2\frac{a_3}{a_2}\lambda\mu&
\!\Gamma^{\kappa}_{\lambda\mu} \!=\!a_3(\frac{\lambda^2}{a_2}\!-\!\frac{\mu^2}{a_1})&
\!\Gamma^{\kappa}_{\lambda\kappa} \!\!=\!-\!\!\frac{a_3}{a_2}\lambda\!& 
\Gamma^{\kappa}_{\mu\mu} \!=\!2\frac{a_3}{a_1}\lambda\mu\!& 
\Gamma^{\kappa}_{\mu\kappa} \!=\!-\frac{a_3}{a_1}\mu\!& 
\Gamma^{\kappa}_{\kappa\kappa} \!=\!0
\end{array}
\]
The geodesic equations on $\rm{H}_1$ corresponding to the
three-parameter invariant metric
\eqref{MTRSINV} are
\begin{subequations}\label{CCXX}
  \begin{align}
    ~~&
        \ddot{p}+2\frac{a_3}{a_1}(-p\dot{q}^2+q\dot{p}\dot{q}+\dot{q}\dot{\kappa})=0,\\
    ~~&\ddot{q}+2\frac{a_3}{a_1}(-q\dot{p}^2+p\dot{p}\dot{q}-\dot{p}\dot{\kappa})=0,\\
    ~~& \ddot{\kappa}+2a_3[\frac{pq}{a_2}(\dot{q}^2-\dot{p}^2)+(\frac{p^2}{a_2}-\frac{q^2}{a_1})\dot{p}\dot{q}-\frac{1}{a_2}(p\dot{p}+q\dot{q})]=0.
  \end{align}
\end{subequations}
We have the following covariant derivatives associated  to the
3-parameter invariant metric \eqref{MTRSINV} of $\rm{H}_1$
\begin{subequations}\label{EQQ3}
  \begin{align}\nabla_{p}\pa_{p}&=-2\frac{a_3}{a_2}qL^q,
                                  \quad\nabla_p\pa_{\kappa}=\nabla_{\kappa}\pa_p=-\frac{a_3}{a_2}L^q,\quad
                                  \nabla_{\kappa}\pa_q=\nabla_q\pa_{\kappa}=\frac{a_3}{a_1}L^p,\\
    \nabla_{p}\pa_{q}& =\nabla_{q}\pa_p=a_3[q(
                             \frac{1}{a_1}\pa_p+\frac{1}{a_2}\pa_q)+(\frac{p^2}{a_2}-\frac{q^2}{a_1})\pa_{\kappa}].
  \end{align}
\end{subequations}
With \eqref{LEFT11} and   \eqref{EQQ3}, we get the covariant
derivatives of the Riemannian connection of the left-invariant metric
\eqref{MTRSINV} with the ortonormal frame \eqref{LEFT11}
\begin{subequations}\label{EQQ44}
  \begin{align}
    \nabla_{L^p}L^p& =0,\quad\nabla_{L^p}L^q=L^{r},\\
    \nabla_{L^p}L^{r}& =-\frac{a_3}{a_2}L^q\quad \nabla_{L^q}L^{r}=\frac{a_3}{a_1}L^p.
    \end{align}
  \end{subequations}
  Equations \eqref{EQQ3} for $ a_1=a_2=a_3=1$ is Lemma 1 in \cite{mar}
  and we can apply Theorem 1 in  \cite{mar}, which is the content of last assertion
  in Theorem \ref{PROP5}. 

We take $a_1=a_2=\gamma$, $a_3=\delta$ in \eqref{CCXX} or in \eqref{EQQ44} we get
the geodesic equations on $\rm{H}_1$ given  in \eqref{EURJ}.
\end{proof}

\section{Appendix: Geodesic mappings}\label{gm}
In  several places of this paper, namely in:
 Proposition \ref{P3}, item c)  for the partial Cayley transform $\Phi$
\eqref{210}, Remark \ref{RE3}, item c) for the Cayley transform, Proposition
\ref{P4} for the change of coordinates \eqref{UVPQ} and in 
Proposition \ref{PP5} for the second partial Cayley transform
\eqref{ULTRAN} $\Phi_1$, we proved that the mentioned
applications are geodesic mappings. 

Bellow   we collect several well known facts about geodesic mappings.

Firstly we recall the notion of {\it isometry}  \cite[p 60]{helg} (or
{\it motion}
\cite[Section 1.2]{alx})  between Riemannian
manifolds.

Let us suppose that    two Riemannian spaces
  $(M_n,g)$,
  $(\tilde{M}_n,\tilde{g})$ have the fundamental forms
  \cite[(40.2)]{EISEN}
\begin{equation}\label{2metr}
\dd s^2_{M_n}=g_{ij}\dd x_i\dd x_j, \quad  \dd
s^2_{\tilde{M}_n}=\tilde{g}_{ij}\dd x_i\dd x_j,
\end{equation} and $f$ is a function
\begin{equation}\label{DEFF}
  f: ~(M_n,g)\rightarrow (\tilde{M}_n,\tilde{g}),\quad
  y=f(x),\end{equation}
such that the Jacobian
  \begin{equation}\label{JAK}J=\det(\frac{ \pa y_i}{\pa
      x_j})_{i,j=1,\dots, n}\not= 0.\end{equation}
  With   \cite[pages 22-24]{helg},  \cite[p 161]{kn1},  \cite[(7.10) p 18]{EISEN} and \cite[(2.15) p 59]{mikes}
\begin{Proposition}\label{oare}
  Let \eqref{DEFF}
  be an isometry, or motion,  between the    Riemannian spaces
  \eqref{DEFF}, i.e.  $f$ is an diffeomorphism of $M_n$
onto $\tilde{M}_n$ and 
\begin{equation}\label{FstarG}
  f^*\tilde{g}=g,
\end{equation}
or 
 \begin{equation}\label{GPXY}g_p(X,Y)=\tilde{g}_{f(p)}(\dd
f_p(X),\dd f_p(Y)), \quad \forall ~p\in M_n, ~\forall X,Y\in
(TM_n)_p.\end{equation}
If $$X_x:= X^i(x)\frac{\pa }{\pa x_i}, \quad Y_x:= Y^i(x)\frac{\pa }{\pa
  x_i},\quad X^*_{f(p)}:=\dd f_p (X_p),$$
 then $$X^i_x=(X_{f(p)}^*)^{\alpha}\frac{\pa x^i}{\pa y^{\alpha}},$$
 and \eqref{GPXY} becomes
 \begin{equation}\label{GIJX}
 g_{ij}(x)=\frac{\pa y^{\alpha}}{\pa
   x^{i}}\tilde{g}_{\alpha\beta}(f(x))\frac{\pa y^{\beta}}{\pa x^j},\quad
 \tilde{g}_{\alpha\beta}(f(x))= \frac{\pa
   x^{i}}{\pa y^{\alpha}} g_{ij}(x) \frac{\pa x^j}{\pa y^{\beta}}, 
\end{equation}
i.e. $g_{ij}$ is a $(0,2)$-contravariant tensor.
\end{Proposition}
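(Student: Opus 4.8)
The plan is to unwind the isometry condition \eqref{FstarG}, equivalently \eqref{GPXY}, in the two local coordinate systems $(x^1,\dots,x^n)$ on $M_n$ and $(y^1,\dots,y^n)$ on $\tilde{M}_n$ related by $y=f(x)$, and then read off the tensorial transformation law \eqref{GIJX} by comparing coefficients of the arbitrary vector components. No analytic input beyond the nondegeneracy \eqref{JAK} enters, so I expect the whole argument to be a short chain of substitutions; the only point requiring care is the consistent bookkeeping of the Jacobian $\frac{\pa y^{\alpha}}{\pa x^i}$ against its inverse $\frac{\pa x^i}{\pa y^{\alpha}}$, whose existence is exactly what \eqref{JAK} guarantees.

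First I would record the coordinate form of the differential. Since $y=f(x)$, the chain rule gives $\dd f_p\big(\frac{\pa}{\pa x^i}\big)=\frac{\pa y^{\alpha}}{\pa x^i}\frac{\pa}{\pa y^{\alpha}}$, so for $X_x=X^i\frac{\pa}{\pa x^i}$ one obtains $(X^*_{f(p)})^{\alpha}=X^i\frac{\pa y^{\alpha}}{\pa x^i}$. By \eqref{JAK} the Jacobian matrix is invertible, and contracting this relation with $\frac{\pa x^j}{\pa y^{\alpha}}$ together with $\frac{\pa x^j}{\pa y^{\alpha}}\frac{\pa y^{\alpha}}{\pa x^i}=\delta^j_i$ inverts it to the stated component identity $X^i_x=(X^*_{f(p)})^{\alpha}\frac{\pa x^i}{\pa y^{\alpha}}$, and likewise for $Y$.

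Next I would substitute into \eqref{GPXY}. Writing the left side as $g_p(X,Y)=g_{ij}(x)X^iY^j$ and the right side as $\tilde{g}_{f(p)}(\dd f_pX,\dd f_pY)=\tilde{g}_{\alpha\beta}(f(x))(X^*)^{\alpha}(Y^*)^{\beta}$, and inserting $(X^*)^{\alpha}=X^i\frac{\pa y^{\alpha}}{\pa x^i}$, $(Y^*)^{\beta}=Y^j\frac{\pa y^{\beta}}{\pa x^j}$, the isometry identity \eqref{GPXY} becomes
\[
g_{ij}(x)X^iY^j=\frac{\pa y^{\alpha}}{\pa x^i}\,\tilde{g}_{\alpha\beta}(f(x))\,\frac{\pa y^{\beta}}{\pa x^j}\,X^iY^j.
\]
Since $X,Y\in(TM_n)_p$ and $p\in M_n$ are arbitrary, the coefficients of $X^iY^j$ must coincide, which is precisely the first formula in \eqref{GIJX}.

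Finally I would invert this relation to reach the second formula. Contracting the first formula of \eqref{GIJX} with $\frac{\pa x^i}{\pa y^{\gamma}}$ and $\frac{\pa x^j}{\pa y^{\delta}}$ and using $\frac{\pa x^i}{\pa y^{\gamma}}\frac{\pa y^{\alpha}}{\pa x^i}=\delta^{\alpha}_{\gamma}$ yields $\tilde{g}_{\gamma\delta}(f(x))=\frac{\pa x^i}{\pa y^{\gamma}}\,g_{ij}(x)\,\frac{\pa x^j}{\pa y^{\delta}}$, the second formula in \eqref{GIJX}. The two formulas are mutually inverse and together state exactly that $g_{ij}$ transforms as a $(0,2)$-tensor under the change of coordinates induced by $f$, which is the assertion. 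I do not anticipate any genuine obstacle here, as the content is purely the local-coordinate rephrasing of $f^*\tilde{g}=g$; the sole subtlety is maintaining the correct index placement so that the Jacobian and its inverse contract to the Kronecker delta as used above.
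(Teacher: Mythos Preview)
Your proposal is correct and is precisely the standard local-coordinate unpacking of $f^*\tilde g=g$; the paper itself does not supply a proof for this proposition but merely cites textbook references (Helgason, Kobayashi--Nomizu, Eisenhart, Mike\v{s}), so your argument in fact fills in what the paper leaves as a citation. There is nothing to add or correct.
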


Now we recall  \cite[ Definition 5.1 p 127]{mikes}
\begin{deff}
If $f$ \eqref{DEFF} is a diffeomorphism between manifolds with
affine connections, then $f$ is called a {\it geodesic mapping} if it maps
geodesic curves on $M_n$ into geodesic curves on $\tilde{M}_n$.
\end{deff}

In fact, there are three methodes for proving that a mapping
\eqref{DEFF} is a  geodesic one.

a) By {\it brute force-calculation}, i.e.:\\
1) suppose that we have  geodesic equations 
$G(x)$ on the manifold $M_n$ in the variables $x$;\\
2) we make the change of coordinates \eqref{DEFF} in $G(x)$ and we get  
a system of  differential equations  $H(y)$ on $\tilde{M}_n$; \\
3) we verify that $H(y)$ are
 geodesic equations on $\tilde{M}_n$ corresponding to the 
Christoffel symbols associate to $\nabla_{\tilde{g}}$.

In the present paper
we have applied this method.

b)  {\it Any motion
group takes geodesic into a geodesic} \cite[Section 1.2 p 9]{alx}.

We
make some more comments.

 - 
  If $f$ is an isometry of a Riemannian manifold,
  then $f$ preserves distances \cite[p 60]{helg}. It is proved in \cite[Theorem
  11.1]{helg} that: Let $M$  be a Riemannian manifold and $f$ a
  distance-preserving mapping onto itself. Then $f$ is an isometry.

  -  One can argue \cite[p 60]{helg}, \cite{ex1}  that the isometry $f$ also preserves the induced
  distances  $d_1$, $d_2$ on $M$, $\tilde{M}$ from $g$, $\tilde{g}$ respectively,  that is $
  d_1(x,y)=d_2(f(x),f(y))$ for $x,y \in M$. It is  easy to show \cite{ex1} 
  that $f$ sends geodesics on $M$ to geodesics on $\tilde{M}$, using the length
  minimizing property of geodesics and that $f$ is
  distance-preserving.
  
  c)  {\it the Levi-Civita} \cite{LC} equations  \eqref{LCE}.

  In \cite[Section 40 ``Spaces with corresponding geodesics'',  pages
  131-133]{EISEN}, Eisenhart considers the two Riemannian spaces $(M_n,g)$,
  $(\tilde{M}_n,\tilde{g})$ with the fundamental forms \eqref{2metr}
  and  Christofell's symbols
 $\Gamma^l_{ij}$, (respectively $\tilde{\Gamma}^l_{ij}$).

Based on \cite[(40.6), (40.8), (11.4)]{EISEN}
\begin{Proposition}\label{PROP55}
  The $\frac{n^2(n+1)}{2}$ equivalent conditions for the spaces  $M_n, \tilde{M}_n$ to have 
  the same  
  geodesics are the Levi-Civita equations
  \begin{subequations}\label{LCE}\begin{align}
        \tilde{\Gamma}^k_{ij} & =   \Gamma^k_{ij}
                                +\psi_i\delta^k_j+\psi_j\delta^k_i,\quad
                                i,j,k=1,\dots,n; \label{LCE1}
                                                \\                      
                                 \tilde{g}_{ij,k}  & = 2
                                 \psi_k\tilde{g}_{ij} +
                                 \psi_i\tilde{g}_{jk} +
                                 \psi_j\tilde{g}_{ik}\label{LCE2},
                               \end{align}
                             \end{subequations}
                             where $\tilde{g}_{ij,k}$ is the covariant
                             derivative  of
                             $\tilde{g}_{ij}$ with respect to $x^k$
                             and the fundamental tensor $g_{ij}$ of
                             type $(0,2)$, i.e. 
                             \begin{equation}\label{CVD}
                               \tilde{g}_{ij,k}=\pa_k\tilde{g}_{ij}-\Gamma^l_{ki}\tilde{g}_{lj}-\Gamma^l_{kj}\tilde{g}_{il},
                             \end{equation}
                             and
                    \begin{equation}\label{derP}
                               \psi_i=\pa_i\Psi, \quad   \Psi=\frac{1}{2(n+1)}\ln
|\frac{\det{\tilde{g}}}{{\det g}}|.\end{equation}
\end{Proposition}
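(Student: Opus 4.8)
The plan is to prove the stated equivalence in three steps, following \cite[Section 40]{EISEN}: first characterize, purely at the level of connections, when $(M_n,g)$ and $(\tilde{M}_n,\tilde{g})$ share their geodesics, obtaining \eqref{LCE1}; then translate this into the metric identity \eqref{LCE2}; and finally fix the covector $\psi_i$ by a trace, obtaining \eqref{derP}. First I would work with the difference tensor $T^k_{ij}:=\tilde{\Gamma}^k_{ij}-\Gamma^k_{ij}$, which is symmetric in $i,j$ since both connections are torsion-free. An affinely parametrized $\nabla$-geodesic satisfies $\ddot{x}^k+\Gamma^k_{ij}\dot{x}^i\dot{x}^j=0$; requiring the same point set to be a $\tilde{\nabla}$-geodesic after reparametrization means $\ddot{x}^k+\tilde{\Gamma}^k_{ij}\dot{x}^i\dot{x}^j=\phi\,\dot{x}^k$ for some scalar $\phi$. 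Subtracting the two relations gives $T^k_{ij}\dot{x}^i\dot{x}^j=\phi\,\dot{x}^k$. Because through each point there passes a geodesic in every direction, the identity $T^k_{ij}v^iv^j=\phi(v)\,v^k$ must hold for every tangent vector $v$; a homogeneity argument forces $\phi$ to be linear, $\phi=2\psi_iv^i$, and polarizing in $v$ yields the projective normal form $T^k_{ij}=\psi_i\delta^k_j+\psi_j\delta^k_i$, which is precisely \eqref{LCE1}. Its $\frac{n^2(n+1)}{2}$ independent components match the count of symmetric symbols $\tilde{\Gamma}^k_{ij}$. Conversely, if $T$ has this form, direct substitution shows that every $\nabla$-geodesic, suitably reparametrized, is a $\tilde{\nabla}$-geodesic, so the two spaces indeed share their geodesics.

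Second, I would impose that $\tilde{\Gamma}$ is the Levi-Civita connection of $\tilde{g}$, i.e. $\tilde{\nabla}_k\tilde{g}_{ij}=0$. Expanding this condition, substituting $\tilde{\Gamma}^l_{ki}=\Gamma^l_{ki}+\psi_k\delta^l_i+\psi_i\delta^l_k$ from \eqref{LCE1}, and recognizing the $\Gamma$-terms as the covariant derivative $\tilde{g}_{ij,k}$ of \eqref{CVD}, the projective contributions collect into $2\psi_k\tilde{g}_{ij}+\psi_i\tilde{g}_{jk}+\psi_j\tilde{g}_{ik}$, which is exactly \eqref{LCE2}. Thus \eqref{LCE2} is the metric-level reformulation of \eqref{LCE1}, equivalent to it given that both connections are metric.

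Finally, to pin down $\psi_i$, I would contract \eqref{LCE1} on $k=j$, which gives $\tilde{\Gamma}^k_{ik}=\Gamma^k_{ik}+(n+1)\psi_i$, and then invoke the standard identity $\Gamma^k_{ik}=\frac{1}{2}\pa_i\ln|\det g|$ together with its analogue for $\tilde{\Gamma},\tilde{g}$. Solving for $\psi_i$ yields $\psi_i=\pa_i\Psi$ with $\Psi=\frac{1}{2(n+1)}\ln\left|\frac{\det\tilde{g}}{\det g}\right|$, which is \eqref{derP}. The main obstacle is the forward implication in the first step, namely the passage from ``$T^k_{ij}v^iv^j$ is proportional to $v^k$ for all $v$'' to the projective form $\psi_i\delta^k_j+\psi_j\delta^k_i$; this hinges on showing the proportionality scalar $\phi$ is linear in $v$ and then polarizing. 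The converse direction and the remaining two steps are routine algebraic manipulations.
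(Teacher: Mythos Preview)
Your proposal is correct and reproduces the classical Eisenhart argument; note that the paper does not supply its own proof here but merely states the proposition ``Based on \cite[(40.6), (40.8), (11.4)]{EISEN}'', so your sketch is precisely the underlying derivation from that reference. The three steps you outline---the projective normal form of the difference tensor, the metric reformulation via $\tilde{\nabla}\tilde{g}=0$, and the trace identity fixing $\psi_i$---are the standard route in \cite[\S 40]{EISEN} and match the paper's citations exactly.
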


- For equations \eqref{LCE1}  see also
\cite[(12) p 157, (72) p 270]{vr} and for \eqref{derP} see \cite[p
270]{vr}. Note that the Levi-Civita condition \eqref{LCE2} appears in
\cite[Theorem 94.1 p 290]{krey} in the case of geodesic mappings of
constant Gaussian curvature between  a portion of a surface $S$ onto
a portion of another surface $S^*$.

- With Proposition \ref{oare}, we get also   the well-known relation \eqref{SCHD}
   which allows us  write down \eqref{derP} as 
  \begin{equation}\label{derP2}
    \Psi=-\frac{1}{n+1}\ln |J|.
\end{equation}
 
- In  \cite[Section 7.1, p
167]{mikes} it is presented the particular
case considered in Proposition \ref{PROP55} of two Riemannian manifolds $V_n=(M,g)$ and
$\tilde{V}_n=(\tilde{M},\tilde{g})$ with corresponding Riemannian
connections $\nabla$ and $\tilde{\nabla}$, respectively. It is supposed
that there exists a geodesic map $f: ~M\rightarrow \tilde{M}$  and
$ \tilde{M}$ is identified via $f$ with $M$ as in \cite[Section 3.1]{mikes}.
Then $V_n$ admits a geodesic mapping onto  $\tilde{V}_n$ if and only
if the  equivalent Levi-Civita equations \eqref{LCE}  hold.

\subsection*{Acknowledgements}

The author is grateful  to Dr. E.M. Babalic for verifying with
{\it Mathematica} the  Christofell symbols   and for improving the
presentation. This research  was conducted in  the  framework of the 
ANCS project  program   PN 19 06
01 01/2019. 


\end{document}